\numberwithin{equation}{section}
\theoremstyle{plain}\newtheorem{thm}{Theorem}[section]
\theoremstyle{plain}\newtheorem{corollary}[thm]{Corollary}
\theoremstyle{definition}\newtheorem{remark}[thm]{Remark}
\theoremstyle{plain}
\theoremstyle{plain}
\theoremstyle{plain}\newtheorem{proposition}[thm]{Proposition}
\theoremstyle{definition}
\newcommand{\ud}{\mbox{d}}
\newcommand{\eps}{\varepsilon}
\DeclareMathOperator{\var}{\mathsf{Var}}
\DeclareMathOperator{\E}{\mathsf{E}}
\newcommand{\prob}{\mathsf{P}}
\newcolumntype{x}[1]{%
	>{\raggedleft\hspace{0pt}}p{#1}}%
\DeclareMathAlphabet{\mathitbf}{OML}{cmm}{b}{it}
\definecolor{seda}{gray}{0.9}
\newcommand{\vast}{\bBigg@{4}}
\newcommand{\Vast}{\bBigg@{5}}
\newcommand{\bA}{\bm{A}}
\newcommand{\ba}{\bm{a}}
\newcommand{\bB}{\bm{B}}
\newcommand{\bc}{\bm{c}}
\newcommand{\bb}{\bm{b}}
\newcommand{\bY}{\bm{Y}}
\newcommand{\bZ}{\bm{Z}}
\newcommand{\bX}{\bm{X}}
\newcommand{\bU}{\bm{U}}
\newcommand{\bV}{\bm{V}}
\newcommand{\bW}{\bm{W}}
\newcommand{\bR}{\bm{R}}
\newcommand{\bI}{\bm{I}}
\newcommand{\bM}{\bm{M}}
\newcommand{\bu}{\bm{u}}
\newcommand{\bv}{\bm{v}}
\newcommand{\bD}{\bm{D}}
\newcommand{\zero}{\bm{0}}
\newcommand{\balpha}{\bm{\alpha}}
\newcommand{\bbeta}{\bm{\beta}}
\newcommand{\bgamma}{\bm{\gamma}}
\newcommand{\beps}{\bm{\varepsilon}}
\newcommand{\bvartheta}{\bm{\vartheta}}
\newcommand{\btheta}{\bm{\Theta}}
\newcommand{\bdelta}{\bm{\delta}}
\newcommand{\bDelta}{\bm{\Delta}}
\newcommand{\bSigma}{\bm{\Sigma}}
\newcommand{\bGamma}{\bm{\Gamma}}
\newcommand{\bvarphi}{\bm{\varphi}}
\newcommand{\Oo}{\mathcal{O}}
\renewcommand{\P}{\mathsf{P}} 
\newcommand{\dist}{\mathsf{D}} 
\newcommand{\weak}{\mathsf{D}[0,1]}
\newcommand{\Op}{\ensuremath{O_{\prob}}} 
\newcommand{\op}{\ensuremath{o_{\prob}}} 
\newtheorem{assumpC}{Assumption}
\newtheorem{assumpD}{Assumption}
\newtheorem{assumpE}{Assumption}
\newtheorem{assumpV}{Assumption}
\title[Changepoint in Linear Relations]{Changepoint in Linear Relations}
\author[M.~Pe\v{s}ta]{Michal Pe\v{s}ta} \address{Charles University, Prague, Czech Republic.}\email{Michal.Pesta@mff.cuni.cz}
\begin{document}
	
\begin{abstract}
Linear relations, containing measurement errors in input and output data, are considered. Parameters of these so-called errors-in-variables models can change at some unknown moment. The aim is to test whether such an~unknown change has occurred or not. For instance, detecting a~change in trend for a~randomly spaced time series is a~special case of the investigated framework. The designed changepoint tests are shown to be consistent and involve neither nuisance parameters nor tuning constants, which makes the testing procedures effortlessly applicable. A~changepoint estimator is also introduced and its consistency is proved. A~boundary issue is avoided, meaning that the changepoint can be detected when being close to the extremities of the observation regime. As a~theoretical basis for the developed methods, a~weak invariance principle for the smallest singular value of the data matrix is provided, assuming weakly dependent and non-stationary errors. The results are presented in a~simulation study, which demonstrates computational efficiency of the techniques. The completely data-driven tests are illustrated through a~calibration problem, however, the methodology can be applied to other areas such as clinical measurements, dietary assessment, computational psychometrics, or environmental toxicology as manifested in the paper.
\end{abstract}

\keywords{changepoint, errors-in-variables, hypothesis testing, non-stationarity, nuisance-parameter-free, singular value, weak invariance principle}

\section{Introduction and main aims}
If measured input and output data are supposed to be in some linear relations, then it is of particular interest to detect whether impact of the input characteristics has changed over time on the output observables. Moreover, only \emph{error-prone surrogates} of the unobservable input-output characteristics are in hand instead of a~precise measurement. Despite the fact that the relations and, consequently, suitable underlying stochastic models are linearly defined, the possible estimates and the corresponding inference may be highly non-linear~\citep{gleser}. It becomes even more challenging to handle measurement errors in input and output data simultaneously, when the linear relations are subject to change at some unknown time point---\emph{changepoint}.

There is a~vast literature aimed at linear relations modeled through so-called \emph{measurement error models} or \emph{errors-in-variables models} (for an~over\-view, see \cite{Fuller1987}, \cite{huffel}, \cite{CRSC2006}, \cite{Buonaccorsi2010}, or \cite{Yi2017}), but very little has been explored in the changepoint analysis for these models yet. A~change in regression has been explored thoroughly, cf.~\cite{H1995} or~\cite{AHHK2008}. However, such a~framework does not cover the case of measurement error models. Maximum likelihood approach~\citep{ChH1997,SS2002} and Bayesian approach~\citep{CRW1999,GK2001} to the changepoint estimation in the measurement error models were applied, both requiring parametric distributional assumptions on the errors. \cite{KMV2007} estimated the changepoint in the input data only. A~change in the variance parameter of the normally distributed errors within the measurement error models was investigated by~\cite{DTJM2016}. All of these mentioned contributions dealt with the changepoint estimation solely. Our \emph{main goal} is to test for a~possible change in the parameters relating the input and output data, both encumbered by some errors. Consequently, if a~change is detected, we aim to estimate it. By our best knowledge, we are not aware of any similar results even for the independent and identically distributed errors. Additionally to that, our changepoint tests are supposed to be \emph{nuisance-parameter-free}, \emph{distributional-free}, and to allow for very general error structures.

\subsection{Outline}
The paper is organized as follows: In the next section, our data model for the changepoint in errors-in-variables is introduced and several practical motivations for such a~model are given. Section~\ref{sec:SWIP} contains a~spectral weak invariance principle for weakly dependent and non-stationary random variables. It serves as the main theoretical tool for the consequent inference. The technical assumptions are discussed as well. Two test statistics for the changepoint detection are proposed in Section~\ref{sec:test}. Consequently, their asymptotic behavior is derived under the null as well as under the alternative hypothesis. Moreover, a~consistent changepoint estimator is introduced. Section~\ref{sec:simulations} contains a~simulation study that compares finite sample performance of the investigated tests. It numerically emphasizes the advantages of the proposed detection procedures. A~practical application of the developed approach to a~calibration problem is presented in Subsection~\ref{subsec:Papplication}. On the other hand, a~theoretical application to randomly spaced time series is performed in Subsection~\ref{subsec:Tapplication}. Afterwards, our conclusion follows. Proofs are given in the Appendix~\ref{sec:proofs}.

\section{Changepoint in errors-in-variables}\label{sec:CPTinEIV}
\emph{Errors-in-variables} (EIV) or also called \emph{measurement error} model
\begin{equation}\label{eq:M}\tag{$\mathcal{M}$}
\bX=\bZ+\btheta
\end{equation}
and
\begin{equation}\label{eq:H0}\tag{$\mathcal{H}_0$}
\bY=\bZ\bbeta+\beps
\end{equation}
is considered, where $\bbeta\in\mathbb{R}^p$ is a~vector of unknown \emph{regression parameters} possibly subject to change, $\bX\in\mathbb{R}^{n\times p}$ and $\bY\in\mathbb{R}^{n\times 1}$ consist of \emph{observable random} variables ($\bX$ are covariates and $\bY$ is a~response), $\bZ\in\mathbb{R}^{n\times p}$ consists of \emph{unknown constants} and has full rank, $\beps\in\mathbb{R}^{n\times 1}$ and $\btheta\in\mathbb{R}^{n\times p}$ are \emph{random errors}. This setup can be extended to a~\emph{multivariate case}, where $\bbeta\in\mathbb{R}^{p\times q}$, $\bY\in\mathbb{R}^{n\times q}$, and $\beps\in\mathbb{R}^{n\times q}$, $q\geq 1$, see Subsection~\ref{subsec:extension}.

The EIV model~\eqref{eq:M}--\eqref{eq:H0} with non-random unknown constants $\bZ$ is sometimes called \emph{functional EIV} model~\citep{Fuller1987,hall}. On the other hand, a~different approach may handle $\bZ$ as random covariates, which is called \emph{structural EIV} model~\citep{ChH1997}. \cite{S2000} stated: `However, functional models played an~important role in the study of measurement error models and in statistics more generally.' And here, we will concentrate on the functional EIV model not because of this matter-of-fact quote, but because we wish to demonstrate a~distributional-free approach, where `no, or only minimal, assumptions are made about the distribution of the $\bX$s' \citep{CRSC2006}, as challenged in the introduction. Nevertheless with respect to derivation of the forthcoming theory for the functional EIV model, changing some technical assumptions would allow to prove suitable results for the structural case as well.

To estimate the unknown parameter~$\bbeta$, one usually minimizes the \emph{Frobenius matrix norm} of the errors $[\btheta,\beps]$, see~\cite{golub}. This approach leads to a~\emph{total least squares} (TLS) estimate $\hat{\bbeta}=(\bX^{\top}\bX-\lambda_{min}([\bX,\bY]^{\top}[\bX,\bY])\bI_p)^{-1}\bX^{\top}\bY$, where $\lambda_{min}(\bM)$ is the smallest eigenvalue of the matrix~$\bM$ and $\bI_p$ is a~$(p\times p)$ identity matrix. Geometrically speaking, the Frobenius norm tries to minimize the \emph{orthogonal distance} between the observations and the fitted hyperplane. Therefore, the TLS are usually known as \emph{orthogonal regression}. One can generalize this method by replacing the Frobenius norm by \emph{any unitary invariance matrix norm}, which surprisingly yields the same TLS estimate, having interesting invariance and equivariance properties~\citep{P2016}. The TLS estimate is shown to be strongly and weakly consistent~\citep{gleser,gallo,P2011} as well as to be asymptotically normal~\citep{gallophd,P2013,P2017} under various conditions.


We aim to detect a~possible change in the linear relation parameter~$\bbeta$. The interest lies in testing the \emph{null hypothesis}~\eqref{eq:H0} of all observations~$Y_i$'s being random variables having expectations~$\bZ_{i,\bullet}\bbeta$'s. Our goal is to test against the alternative of the first $\tau$ outcome observations have expectations~$\bZ_{i,\bullet}\bbeta$'s and the remaining $n-\tau$ observations come from distributions with expectations $\bZ_{i,\bullet}(\bbeta+\bdelta)$'s, where $\bdelta\neq\zero$. A~`row-column' notation for a~matrix $\bM$ is used in this manner: $\bM_{i,\bullet}$ denotes the $i$th row of~$\bM$ and $\bM_{\bullet,j}$ corresponds to the $j$th column of~$\bM$. Furthermore, if $i\in\mathbb{N}_0$, then $\bM_{i}$ stays for the first~$i$ rows of~$\bM$ and $\bM_{-i}$ represents the remaining $n-i$ rows of~$\bM$, when the first~$i$ rows are deleted. Now more precisely, our \emph{alternative hypothesis} is
\begin{equation}\label{eq:HA}\tag{$\mathcal{H}_A$}
\bY_{\tau}=\bZ_{\tau}\bbeta+\beps_{\tau}\quad\mbox{and}\quad\bY_{-\tau}=\bZ_{-\tau}(\bbeta+\bdelta)+\beps_{-\tau}.
\end{equation}
Here, $\bdelta\equiv\bdelta(n)\neq\zero$ is an~unknown vector parameter representing the size of change and is possibly depending on~$n$. The \emph{changepoint} $\tau\equiv\tau(n)<n$ is also an~unknown scalar parameter, which depends on~$n$ as well. Although, $\bbeta$ is considered to be independent of~$n$. One may also think of the changepoint in errors-in-variables framework as \emph{segmented regression with measurement errors}, cf.~\cite{SS2002}.

\subsection{Intercept and fixed regressors}
Note that the EIV model~\eqref{eq:M}--\eqref{eq:H0} has no intercept and all the covariates are encumbered by some errors. To overcome such a~restriction, one can think of an~extended regression model, where some explanatory variables are \emph{subject to error} and some are measured \emph{precisely}. I.e., $\bY=\bW\bgamma+\bZ\bbeta+\beps$, where $\bW$ are \emph{observable true} and $\bZ$ are \emph{unobservable true} constants, both having full rank. Regression parameters $\bgamma$ and $\bbeta$ remain unknown. Then, the non-random (fixed) \emph{intercept} can be incorporated into the regression model by setting one column of the matrix~$\bW$ equal to $[1,\ldots,1]^{\top}$. Consequently, we may \emph{project out} exact observations using projection matrix $\bR:=\bI_n-\bW(\bW^{\top}\bW)^{-1}\bW^{\top}$. Notice that $\bR$ is symmetric and idempotent. Finally, one may work with $\bR\bY=\bR\bZ\bbeta+\bR\beps$ instead of~\eqref{eq:H0}.

\subsection{Motivations}
The proposed class of models---\emph{errors-in-variables with changepoint}---is very rich and general. Our approach and results are motivated in the context of several applications taken from chemistry, biological sciences, medicine, and epidemiological studies.

\vspace{-0.3cm}
\paragraph{Case~1: Assessing agreement in clinical measurement.}
Direct measurement of cardiac stroke volume or blood pressure without adverse effects is difficult or even impossible. The true values remain unknown. Indirect methods are, therefore, used instead. When a~new measurement technique is developed, it has to be evaluated by comparison with an~established technique rather than with the true quantity~\citep{BA1986}. Clinicians need to test whether both measurement techniques agree sufficiently. Thereafter, the old technique may be replaced by the new one.

\vspace{-0.3cm}
\paragraph{Case~2: Nutritional epidemiology.}
\cite{SS2002} analyzed data from a~nutritional study that investigates the relation between dietary folate intake (calories adjusted $\mu$g/day) on plasma homocysteine concentration ($\mu$mol/liter of blood). There exists a~suspicion that serum homocysteine is significantly elevated when ingested folate is below a~certain changepoint. Moreover, the analysis used estimates of folate that were developed with a~food frequency questionnaire, which is recognized to be imperfect.

\vspace{-0.3cm}
\paragraph{Case~3: Psychometric testing.}
Let us think of two psychometric instruments: unspeeded 15-item vocabulary tests and highly speeded 75-item vocabulary tests, cf.~\cite{L1973}. The results of both tests are error-prone. Within a~group of people, there is a~speculation that individuals with an~unspeeded test's result exceeding some unknown level should perform dramatically better in the highly speeded test.

\vspace{-0.3cm}
\paragraph{Case~4: Environmental toxicology.}
A~threshold limiting value in toxicology is the dose of a~toxin or a~substance under which there is harmless or insignificant influence on some response. In a~dose-response relationship, both of them are measured with errors. And the goal is to set the threshold limiting value. Such a~problem was dealt by~\cite{GK2001} using fully Bayesian approach. Moreover, a~similar task regarding the NO$_2$ concentration is discussed by~\cite{S2000}.

\vspace{-0.3cm}
\paragraph{Case~5: Device calibration.}
Later on in Subsection~\ref{subsec:Papplication}, we concentrate in more details on the \emph{calibration} task and \emph{exemplify the proposed methodology} through analysis of data from a~calibrated device and a~casual device (needs to be calibrated) in order to demonstrate practical efficiency of our detection method.

\bigskip

Besides that, there are many other applications of the changepoint within the linear relations framework in, for instance, glaciology~\citep{watson}, empirical economics~\citep{ChH1997}, dietary assessment~\citep{CRW1999}, or image forensics~\citep{RL2014}.

\section{Spectral weak invariance principle}\label{sec:SWIP}
A~theoretical device is going to be developed in order to construct the changepoint tests. The \emph{smallest eigenvalue} of~$\bSigma^{-1}[\bX,\bY]^{\top}[\bX,\bY]$---the squared smallest singular value of~$[\bX,\bY]\bSigma^{-1/2}$, i.e., the data matrix~$[\bX,\bY]$ multiplied by the inverse of a~matrix square root from the error variance structure (cf.~subsequent Assumption~\ref{assump:EIVerrors})---plays a~key role. We proceed to the assumptions that are needed for deriving forthcoming asymptotic results. Henceforth, $\xrightarrow{\prob}$ denotes convergence in probability, $\xrightarrow{\dist}$ convergence in distribution, $\xrightarrow[n\to\infty]{\weak}$ weak convergence in the Skorokhod topology $\weak$ of c\`adl\`ag functions on $[0,1]$, and $[x]$ denotes the integer part of the real number~$x$.

\subsection{Assumptions}\label{subsec:assump}
Firstly, a~\emph{design assumption} on the unobservable regressors is needed.

\begin{assumpD}\label{assump:EIVdesign}
$\bDelta_{t}:=\lim_{n\to\infty}n^{-1}\bZ_{[nt]}^{\top}\bZ_{[nt]}$, $\bDelta_{-t}:=\lim_{n\to\infty}n^{-1}\bZ_{-[nt]}^{\top}\bZ_{-[nt]}$ for every $t\in(0,1)$, and $\bDelta:=\lim_{n\to\infty}n^{-1}\bZ^{\top}\bZ$ are positive definite.
\end{assumpD}

It basically says that the error-free design points do not concentrate to close to each other (i.e., strict positive definiteness) and, simultaneously, they do not spread-out too far (i.e., existence of limits). For example in one-dimensional case (i.e., $p=1$), a~simple equidistant design, where $Z_{i,1}=i/(n+1)$, provides $\bDelta_{t}=t^3/3$ and $\bDelta=1/3$.

Prior to postulating an~\emph{errors' assumption}, we summarize the notion of \emph{strong mixing} ($\alpha$-mixing) dependence in more detail, which will be imposed on the model's errors. Suppose that $\{\xi_n\}_{n=1}^{\infty}$ is a~sequence of random elements on a~probability space $(\Omega,\mathcal{F},\P)$. For sub-$\sigma$-fields $\mathcal{A},\mathcal{B}\subseteq\mathcal{F}$, let $\alpha(\mathcal{A}|\mathcal{B}):=\sup_{A\in\mathcal{A},B\in\mathcal{B}}\left|\P(A\cap B)-\P(A)\P(B)\right|$. Intuitively, $\alpha(\cdot|\cdot)$ measures the dependence of the events in $\mathcal{B}$ on those in $\mathcal{A}$. 
There are many ways in which one can describe weak dependence or, in other words, \emph{asymptotic independence} of random variables, see~\cite{Bradley2005}. Considering a~filtration $\mathcal{F}_m^n=\sigma\{\xi_i\in\mathcal{F},m\leq i\leq n\}$, sequence $\{\xi_n\}_{n=1}^{\infty}$ of random variables is said to be \emph{strong mixing} ($\alpha$-mixing) if $\alpha(\xi_{\circ},n)=\sup_{k\in\mathbb{N}}\alpha(\mathcal{F}_{1}^k|\mathcal{F}_{k+n}^{\infty})\to 0$ as $n\to\infty$. \cite{Anderson1958} comprehensively analyzed a~class of $m$-dependent processes. They are $\alpha$-mixing, since they are finite order ARMA processes with innovations satisfying \emph{Doeblin's condition} \citep[p.~168]{Billingsley1968}. Finite order processes, which do not satisfy Doeblin's condition, can be shown to be $\alpha$-mixing \citep[pp.~312--313]{IL1971}. \cite{Rosenblatt1971} provides general conditions under which stationary Markov processes are $\alpha$-mixing. Since functions of mixing processes are themselves mixing \citep{Bradley2005}, time-varying functions of any of the processes just mentioned are mixing as well. This means that the class of the $\alpha$-mixing processes is sufficiently large for the further practical applications and that is why we chose such a~mixing condition. 

\begin{assumpE}\label{assump:EIVerrors}
$\{[\btheta_{n,\bullet},\eps_n]\}_{n=1}^{\infty}$ is a~sequence of $\alpha$-mixing absolutely continuous random vectors having zero mean and a~variance matrix~$\sigma^2\bSigma$ with an~unknown $\sigma^2>0$ and a~known positive definite~$\bSigma=\begin{bmatrix}
\bSigma_{\btheta} & \bSigma_{\btheta,\beps}\\
\bSigma_{\btheta,\beps}^{\top} & 1
\end{bmatrix}$ such that $\alpha([\btheta_{\circ,\bullet},\eps_{\circ}],n)=\Oo(n^{-1-\varpi})$ as $n\to\infty$ for some $\varpi>0$, $\sup_{n\in\mathbb{N}} Z_{n,j}^2<\infty$, $\sup_{n\in\mathbb{N}}\E|\Theta_{n,j}|^{4+\omega}<\infty$, $j\in\{1,\ldots,p\}$, and $\sup_{n\in\mathbb{N}}\E|\eps_n|^{4+\omega}<\infty$ for some $\omega>0$ such that $\omega\varpi>2$.
\end{assumpE}

Let us emphasize that the sequence of the errors \emph{do not have to be stationary}. The assumption of an~unknown~$\sigma^2$ and a~known~$\bSigma$ implies that we know the ratio of any pair of covariances in advance. In the simplest situation, a~homoscedastic covariance structure of the within-individual errors $[\btheta_{n,\bullet},\eps_n]$ can be assumed (i.e., $\bSigma=\bI_{p+1}$), if prior experience or essence of the analyzed problem allow for that. On the other hand, if the covariance matrix $\bSigma$ is unknown, it can be estimated when possessing \emph{replicate measurements} or \emph{validation data} as commented by~\cite{S2000}. There are various approaches proposed to serve this purpose. In order ot mention at least some of them, we refer to \cite{ChR2006}, \cite{GL2011}, \cite{P2013}, or \cite{LML2019}. On the top of that, we have to bear in mind that~$\bSigma$ cannot be completely unspecified. \cite{N1977} showed that if~$\bSigma$ is unrestricted, no strongly consistent estimator for~$\bbeta$ can exist even under normally distributed errors.


Furthermore, a~\emph{variance assumption} for the misfit disturbances is stated. It can be considered as a~typical assumption for the \emph{long-run variance} of residuals. Let us denote $\bSigma^{-1/2}=\begin{bmatrix}
\bar{\bSigma}_{\btheta} & \bar{\bSigma}_{\btheta,\beps}\\
\bar{\bSigma}_{\btheta,\beps}^{\top} & \bar{\Sigma}_{\beps}
\end{bmatrix}$ a~symmetric square root of~$\bSigma^{-1}$, where $\bar{\Sigma}_{\beps}\in\mathbb{R}$ is a~scalar.


\begin{assumpV}\label{assump:EIVmisfit}
There exist 
$\phi:=\bar{\Sigma}_{\beps}-\bar{\bSigma}_{\btheta,\beps}^{\top}(\bar{\bSigma}_{\btheta}+\bbeta\bar{\bSigma}_{\btheta,\beps}^{\top})^{-1}(\bar{\bSigma}_{\btheta,\beps}+\bbeta\bar{\Sigma}_{\beps})\neq 0$ and $\upsilon:=\lim_{n\to\infty}n^{-1}\var\|\bY-\bX\bbeta\|_2^2>0$.
\end{assumpV}

Let us remark that $\bar{\bSigma}_{\btheta,\beps}=\zero$ for the uncorrelated error structure and, then, $\phi=\bar{\Sigma}_{\beps}$.

\subsection{SWIP}
Finally, the spectral weak invariance principle for the smallest eigenvalues is provided. Let us denote $\lambda_{i}:=\lambda_{min}(\bSigma^{-1}[\bX_{i},\bY_{i}]^{\top}[\bX_{i},\bY_{i}])$ for $2\leq i\leq n$, $\lambda_0:=\lambda_1:=0$ and $\widetilde{\lambda}_{i}:=\lambda_{min}(\bSigma^{-1}[\bX_{-i},\bY_{-i}]^{\top}[\bX_{-i},\bY_{-i}])$ for $0\leq i\leq n-2$, $\widetilde{\lambda}_n:=\widetilde{\lambda}_{n-1}:=0$. Note that $\lambda_n\equiv\widetilde{\lambda}_0$.

\begin{proposition}[SWIP]\label{prop:SWIP}
Let~\ref{eq:M} and~\ref{eq:H0} hold. Under Assumptions~\ref{assump:EIVdesign}, \ref{assump:EIVerrors}, and \ref{assump:EIVmisfit},
\begin{equation*}
\left\{\frac{1}{\sqrt{n}}\left(\lambda_{[nt]}-[nt]\sigma^2\right)\right\}_{t\in[0,1]}\xrightarrow[n\to\infty]{\dist[0,1]}\left\{\frac{\phi^2\upsilon}{1+\|\balpha\|_2^2}\mathcal{W}(t)\right\}_{t\in[0,1]}
\end{equation*}
and
\begin{equation*}
\left\{\frac{1}{\sqrt{n}}\left(\widetilde{\lambda}_{[n(1-t)]}-[n(1-t)]\sigma^2\right)\right\}_{t\in[0,1]}\xrightarrow[n\to\infty]{\dist[0,1]}\left\{\frac{\phi^2\upsilon}{1+\|\balpha\|_2^2}\widetilde{\mathcal{W}}(t)\right\}_{t\in[0,1]},
\end{equation*}
where $\{\mathcal{W}(t)\}_{t\in[0,1]}$ is a~standard Wiener process , $\widetilde{\mathcal{W}}(t)=\mathcal{W}(1)-\mathcal{W}(t)$, and $\balpha=(\bar{\bSigma}_{\btheta}+\bbeta\bar{\bSigma}_{\btheta,\beps}^{\top})^{-1}(\bar{\bSigma}_{\btheta,\beps}+\bbeta\bar{\Sigma}_{\beps})$.
\end{proposition}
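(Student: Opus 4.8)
The plan is to reduce every generalised smallest eigenvalue to an explicit quadratic form in the whitened errors and then to apply a functional central limit theorem. Writing $\bU:=[\bX,\bY]\bSigma^{-1/2}$ one has $\lambda_i=\lambda_{min}(\bU_i^{\top}\bU_i)$, and, because the rows of $[\btheta,\beps]$ carry covariance $\sigma^2\bSigma$, the rows of the error part $[\btheta,\beps]\bSigma^{-1/2}$ have covariance $\sigma^2\bI_{p+1}$: the transformation whitens the errors. With $\bC:=[\bI_p,\bbeta]$ the model gives the additive split $\bU_i=\bS_i+\bV_i$, where $\bS_i:=\bZ_i\bC\bSigma^{-1/2}$ is deterministic of rank $p$ (Assumption~\ref{assump:EIVdesign}) and $\bV_i:=[\btheta_i,\beps_i]\bSigma^{-1/2}$ is the whitened noise. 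The unique (up to sign) unit vector annihilating the signal solves $\bC\bSigma^{-1/2}\bw^{\ast}=\zero$; reading $\bSigma^{-1/2}$ in blocks yields $\bw^{\ast}=(1+\|\balpha\|_2^2)^{-1/2}(-\balpha^{\top},1)^{\top}$, so that $\bS_i\bw^{\ast}=\zero$ for all $i$.

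Since $\bS_i^{\top}\bS_i$ has a simple zero eigenvalue with eigenvector $\bw^{\ast}$, separated by a gap $\asymp i$ from its $p$ remaining eigenvalues (again Assumption~\ref{assump:EIVdesign}), second-order eigenvalue perturbation applied to $\bU_i^{\top}\bU_i=\bS_i^{\top}\bS_i+(\bS_i^{\top}\bV_i+\bV_i^{\top}\bS_i+\bV_i^{\top}\bV_i)$ gives $\lambda_i=\bw^{\ast\top}\bV_i^{\top}\bV_i\bw^{\ast}+\rho_i=\sum_{k=1}^{i}\zeta_k^{2}+\rho_i$, with $\zeta_k:=\bV_{k,\bullet}\bw^{\ast}$. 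The first-order cross term $2(\bS_i\bw^{\ast})^{\top}\bV_i\bw^{\ast}$ vanishes precisely because $\bS_i\bw^{\ast}=\zero$, which is why the signal does not appear at leading order. The remainder is of second order, essentially $\|\bS_i^{\top}\bV_i\bw^{\ast}\|_2^{2}$ divided by the gap $\asymp i$; as $\bS_i^{\top}\bV_i\bw^{\ast}$ is a mean-zero partial sum of weakly dependent terms, a maximal inequality gives $\sup_{i\le n}\|\bS_i^{\top}\bV_i\bw^{\ast}\|_2=\Op(\sqrt n)$ and hence $\sup_{i\le n}|\rho_i|=\Op(1)=\op(\sqrt n)$.

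A direct computation gives $\zeta_k=(\eps_k-\btheta_{k,\bullet}\bbeta)/(\bv_0^{\top}\bSigma\bv_0)^{1/2}$ with $\bv_0:=(-\bbeta^{\top},1)^{\top}$, whence $\E\zeta_k^{2}=\sigma^2$ and the centring is exact: $\tfrac{1}{\sqrt n}(\lambda_{[nt]}-[nt]\sigma^2)=\tfrac{1}{\sqrt n}\sum_{k=1}^{[nt]}(\zeta_k^{2}-\sigma^2)+\op(1)$ uniformly in $t$. The summands are a measurable functional of the $\alpha$-mixing vectors $[\btheta_{k,\bullet},\eps_k]$, hence $\alpha$-mixing with the same rate; Assumption~\ref{assump:EIVerrors} supplies the $(2+\omega/2)$ moments and the summable mixing ($\alpha(n)=\Oo(n^{-1-\varpi})$, $\omega\varpi>2$) required by the invariance principle for non-stationary strongly mixing sequences. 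Its variance per unit time is $\lim_{n}n^{-1}\var\sum_{k}\zeta_k^{2}=\upsilon/(\bv_0^{\top}\bSigma\bv_0)^{2}$, finite and positive by Assumption~\ref{assump:EIVmisfit}, so the process converges weakly to $\sqrt{\upsilon}\,(\bv_0^{\top}\bSigma\bv_0)^{-1}\mathcal{W}(t)$. Reading the last coordinate of $\bSigma^{-1/2}\bw^{\ast}=(\bv_0^{\top}\bSigma\bv_0)^{-1/2}\bv_0$ and recognising $\bar{\Sigma}_{\beps}-\bar{\bSigma}_{\btheta,\beps}^{\top}\balpha=\phi$ yields the block identity $(\bv_0^{\top}\bSigma\bv_0)^{-1}=\phi^2/(1+\|\balpha\|_2^2)$, which puts the scaling constant into the asserted form.

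The second display is the same statement for the complementary rows: the identical reduction gives $\widetilde{\lambda}_i=\sum_{k=i+1}^{n}\zeta_k^{2}+\op(\sqrt n)$, and writing the tail sum as total minus head turns the Donsker limit into $\mathcal{W}(1)-\mathcal{W}(\cdot)=\widetilde{\mathcal{W}}(\cdot)$, the joint convergence being automatic since both processes are continuous functionals of the one partial-sum process. I expect the genuine difficulty to be the uniformity of the reduction near the boundary $t\to0$ (and $t\to1$): for $i\le p$ the signal is rank-deficient and $\lambda_i\equiv0$, so one must verify that the spectral-gap lower bound from Assumption~\ref{assump:EIVdesign} and the maximal inequality controlling $\rho_i$ survive uniformly over the range of small indices, together with a tightness estimate showing the contribution there is negligible --- this is exactly where the boundary issue advertised in the abstract must be dispatched.
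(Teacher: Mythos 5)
Your proposal is correct in substance but executes the key reduction differently from the paper. The paper never performs the eigenvalue perturbation itself: it invokes the exact algebraic identity from \citet[proof of Lemma~4.2]{gleser}, writing $\lambda_{[nt]}-[nt]\sigma^2$ as $\big(v_{p+1}(t)^{(p+1)}\big)^2[\ba_t^{\top},-1]\{n^{-1/2}(\bD_t-\E\bD_t)\}[\ba_t^{\top},-1]^{\top}$ plus a signal term, where $\ba_t$ is the TLS estimator on the first $[nt]$ rows and $\bD_t$ the whitened Gram matrix; it then imports $\ba_t\to\balpha$ a.s.\ and $\sqrt{n}(\ba_t-\balpha)=\Op(1)$ from \cite{P2011,Pesta2013}, together with $\big(v_{p+1}(t)^{(p+1)}\big)^2\to(1+\|\balpha\|_2^2)^{-1}$, so that the $\sqrt{n}$-rate of the estimated direction is precisely what kills the signal term (it is quadratic in $\ba_t-\balpha$ because $[\balpha^{\top},-1]\bSigma^{-1/2}[\bI_p,\bbeta]^{\top}=\zero$). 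Your argument replaces all of this machinery by direct second-order perturbation around the deterministic annihilator $\bm{w}^{\ast}$, never introducing the TLS estimator; your spectral-gap remainder $\rho_i$ plays exactly the role that the imported $\sqrt{n}$-consistency plays in the paper. Both routes land on the identical leading object --- your $\sum_{k}\zeta_k^2=\frac{\phi^2}{1+\|\balpha\|_2^2}\|\bY_{[nt]}-\bX_{[nt]}\bbeta\|_2^2$ is the paper's identity $[\balpha^{\top},-1]\bD_t[\balpha^{\top},-1]^{\top}=\phi^2\|\bY_{[nt]}-\bX_{[nt]}\bbeta\|_2^2$ --- and both finish with the same $\alpha$-mixing FCLT (Herrndorf; Lin and Lu), your block identity $(\bv_0^{\top}\bSigma\bv_0)^{-1}=\phi^2/(1+\|\balpha\|_2^2)$ being the correct bridge between the two normalizations. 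The paper's route buys economy: the delicate remainder and uniformity questions are outsourced to the cited TLS asymptotics. Your route buys self-containedness and makes transparent why the signal vanishes at first order ($\bS_i\bm{w}^{\ast}=\zero$).

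Two caveats. First, as literally stated your remainder bound does not close: with gap $\asymp i$ and only $\sup_{i\leq n}\|\bS_i^{\top}\bV_i\bm{w}^{\ast}\|_2=\Op(\sqrt{n})$, you get $\rho_i=\Op(n/i)$, which is $\Op(1)$ only for $i\asymp n$; uniformity down to small $i$ requires a H\'ajek--R\'enyi-type maximal inequality giving $\|\bS_i^{\top}\bV_i\bm{w}^{\ast}\|_2^2=\Op(i\log n)$ or similar, plus the trivial two-sided bound $0\leq\lambda_i\leq\sum_{k\leq i}\zeta_k^2$ for rank-deficient indices --- you correctly flag this in your closing paragraph, and it is fair to note the paper's own proof is silent on exactly this uniformity, burying it in pointwise $\Op$ statements and the references. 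Second, your limit constant comes out as $\phi^2\sqrt{\upsilon}/(1+\|\balpha\|_2^2)$ rather than the stated $\phi^2\upsilon/(1+\|\balpha\|_2^2)$; since $\upsilon$ is defined as a limiting variance, the Donsker scaling indeed produces your $\sqrt{\upsilon}$, so the discrepancy sits on the paper's side --- and it is immaterial downstream, as the whole constant cancels in the self-normalized statistics $\mathscr{S}_n$ and $\mathscr{T}_n$.
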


\subsection{Extension to multivariate case}\label{subsec:extension}
Suppose that $\bbeta\in\mathbb{R}^{p\times q}$, $\bY\in\mathbb{R}^{n\times q}$, and $\beps\in\mathbb{R}^{n\times q}$, $q\geq 1$. Let the singular value decomposition (SVD) of the partial transformed data be
\[
[\bX_{[nt]},\bY_{[nt]}]\bSigma^{-1/2}=\bU(t)\bGamma(t)\bV^{\top}(t)=\sum_{i=1}^{p+q}\varsigma(t)^{(i)}\bu(t)^{(i)}\bv(t)^{(i)\top},
\]
where $\bu(t)^{(i)}$'s are the left-singular vectors, $\bv(t)^{(i)}$'s are the right-singular vectors, and $\varsigma(t)^{(i)}$'s are the singular values in the non-increasing order. One may replace $\lambda_{[nt]}$ by
\[
\Lambda_{[nt]}:=\sum_{j=1}^{q}\Big(\varsigma(t)^{(p+j)}\Big)^2
\]
in Proposition~\ref{prop:SWIP} (and analogously for~$\widetilde{\lambda}_{[n(1-t)]}$). Then, the SWIP can be derived again (see the proof of Proposition~\ref{prop:SWIP}), provided adequately extended assumptions on the errors $\{\beps_{n,1}\}_{n=1}^{\infty},\ldots,\{\beps_{n,q}\}_{n=1}^{\infty}$ instead of the original ones $\{\eps_n\}_{n=1}^{\infty}$. However, the consequent proofs would become more technical.

\section{Nuisance-parameter-free detection}\label{sec:test}
Consistent estimation of~$\bbeta$ can be performed via the generalized TLS approach~\citep{gallo,HV1989}. The optimizing problem
\begin{equation*}
[\bb,\hat{\btheta},\hat{\beps}]:=\mathop{\arg\min}_{\left[\btheta,\beps\right]\in\mathbb{R}^{n\times (p+1)},{\bbeta}\in\mathbb{R}^p}\left\|\left[\btheta,\beps\right]\bSigma^{-1/2}\right\|_F\quad\mbox{s.t.}\quad \bY-{\beps}=({\bX}-{\btheta}){\bbeta},
\end{equation*}
where $\|\cdot\|_F$ stands for the Frobenius matrix norm, has a~solution consisting of the estimator
\begin{equation}\label{eq:TLS}
\bb=(\bX^{\top}\bX-\lambda_{n}\bSigma_{\btheta})^{-1}(\bX^{\top}\bY-\lambda_{n}\bSigma_{\btheta,\beps})
\end{equation}
and the fitted errors $[\hat{\btheta},\hat{\beps}]$ such that
\begin{equation}\label{eq:Fnorm}
\big\|[\hat{\btheta},\hat{\beps}]\bSigma^{-1/2}\big\|_F^2=\lambda_n.
\end{equation}
We construct the changepoint test statistics based on property~\eqref{eq:Fnorm}.



\subsection{Changepoint test statistics}
Let us think of two TLS estimates of~$\bbeta$: The first one based on the first~$i$ data lines $[\bX_i,\bY_i]$ and the second one based on the first~$k$ data lines $[\bX_k,\bY_k]$ such that $1\leq i\leq k\leq n$. Under the null~\ref{eq:H0}, these two TLS estimates should be close to each other. On the other hand, under the alternative~\ref{eq:HA} such that $\tau\in\{i,\ldots,k\}$, they should be somehow different. A~similar conclusion can be made for the \emph{goodness-of-fit} statistics coming from~\eqref{eq:Fnorm}. It means that
\begin{equation*}
\lambda_i-\frac{i}{k}\lambda_k
\end{equation*}
should be reasonably small under the null~\ref{eq:H0}. Under the alternative~\ref{eq:HA} such that $\tau\in\{i,\ldots,k\}$, it should be relatively large. For the multivariate case described in previous Subsection~\ref{subsec:extension}, one has to replace~$\lambda_k$ by $\Lambda_k=\sum_{j=1}^{q}\big(\varsigma(k/n)^{(p+j)}\big)^2$.

We rely on \emph{self-normalized test statistics} introduced by~\cite{shao2010testing}, because the unknown quantity $\phi^2\upsilon/(1+\|\balpha\|_2^2)$ from Proposition~\ref{prop:SWIP} cancels out in the test statistics. Our \emph{supremum-type self-normalized test statistic} based on the \emph{goodness-of-fit} is defined as
\begin{equation}\label{eq:statisticS}
\mathscr{S}_n:=\max_{1\leq k< n}\frac{\big|\lambda_k-\frac{k}{n}\lambda_n\big|}{\max_{1\leq i< k}\big|\lambda_i-\frac{i}{k}\lambda_k\big|+\max_{k< i\leq n}\big|\widetilde{\lambda}_{i}-\frac{n-i}{n-k}\widetilde{\lambda}_{k}\big|}
\end{equation}
and the \emph{integral-type self-normalized test statistic} is defined as
\begin{equation}\label{eq:statisticT}
\mathscr{T}_n:=\sum_{k=1}^{n-1}\frac{\big(\lambda_k-\frac{k}{n}\lambda_n\big)^2}{\sum_{i=1}^{k-1}\big(\lambda_i-\frac{i}{k}\lambda_k\big)^2+\sum_{i=k+1}^{n}\big(\widetilde{\lambda}_{i}-\frac{n-i}{n-k}\widetilde{\lambda}_{k}\big)^2}.
\end{equation}

Let us note that evaluations of the above defined test statistics require just several singular value decompositions, which is reasonably \emph{quick}. Our new test statistics involve \emph{neither nuisance parameters nor tuning constants} and will work for non-stationary and weakly dependent data. On the top of that, \emph{no boundary issue} is present meaning that the tests can detect the change close to the beginning or to the end of the studied regime.

Under the null hypothesis and the technical assumptions from Subsection~\ref{subsec:assump}, the test statistics defined in~\eqref{eq:statisticS} and~\eqref{eq:statisticT} converge to \emph{non-degenerate limit distributions} (their quantiles can be found in Subsection~\ref{subsec:crit}).

\begin{thm}[Under the null]\label{thm:H0}
	Let~\ref{eq:M} and~\ref{eq:H0} hold. Under Assumptions~\ref{assump:EIVdesign}, \ref{assump:EIVerrors}, and \ref{assump:EIVmisfit},
	\begin{equation}\label{eq:limit_distS}
	\mathscr{S}_n\xrightarrow[n\to\infty]{\dist}\sup_{t\in[0,1]}\frac{\big|\mathcal{W}(t)-t\mathcal{W}(1)\big|}{\sup_{s\in[0,t]}\big|\mathcal{W}(s)-\frac{s}{t}\mathcal{W}(t)\big|+\sup_{s\in[t,1]}\big|\widetilde{\mathcal{W}}(s)-\frac{1-s}{1-t}\widetilde{\mathcal{W}}(t)\big|}
	\end{equation}
	and
	\begin{equation}\label{eq:limit_distT}
	\mathscr{T}_n\xrightarrow[n\to\infty]{\dist}\int_0^1\frac{\big\{\mathcal{W}(t)-t\mathcal{W}(1)\big\}^2}{\int_0^t\big\{\mathcal{W}(s)-\frac{s}{t}\mathcal{W}(t)\big\}^2\ud s+\int_t^1\big\{\widetilde{\mathcal{W}}(s)-\frac{1-s}{1-t}\widetilde{\mathcal{W}}(t)\big\}^2\ud s}\ud t,
	\end{equation}
	where $\{\mathcal{W}(t)\}_{t\in[0,1]}$ is a~standard Wiener process and $\widetilde{\mathcal{W}}(t)=\mathcal{W}(1)-\mathcal{W}(t)$.
\end{thm}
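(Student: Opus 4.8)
The plan is to derive both limits from the spectral weak invariance principle (Proposition~\ref{prop:SWIP}) by the continuous mapping theorem, the point being that the self-normalisation makes the unknown scale $c:=\phi^2\upsilon/(1+\|\balpha\|_2^2)$ cancel. I would introduce the two rescaled partial-eigenvalue processes
\[
W_n(t):=\frac{1}{\sqrt n}\big(\lambda_{[nt]}-[nt]\sigma^2\big),\qquad
\widetilde W_n(t):=\frac{1}{\sqrt n}\big(\widetilde\lambda_{[nt]}-(n-[nt])\sigma^2\big),
\]
for which Proposition~\ref{prop:SWIP} yields $W_n\xrightarrow[n\to\infty]{\dist[0,1]}c\,\mathcal W$ and $\widetilde W_n\xrightarrow[n\to\infty]{\dist[0,1]}c\,\widetilde{\mathcal W}$ (the latter being the backward form of the second display in the proposition, centred at the number $n-[nt]$ of rows actually entering $\widetilde\lambda_{[nt]}$). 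The first fact to record is that, because the weights $k/n$, $i/k$ and $(n-i)/(n-k)$ are exactly the ratios of the numbers of data rows entering the respective eigenvalues, the deterministic drift $[nt]\sigma^2$ cancels in every building block of~\eqref{eq:statisticS}--\eqref{eq:statisticT}: writing $t=k/n$ and $s=i/n$,
\begin{align*}
\tfrac{1}{\sqrt n}\big(\lambda_k-\tfrac kn\lambda_n\big)&=W_n(t)-\tfrac{[nt]}{n}\,W_n(1),\\
\tfrac{1}{\sqrt n}\big(\lambda_i-\tfrac ik\lambda_k\big)&=W_n(s)-\tfrac{[ns]}{[nt]}\,W_n(t),\\
\tfrac{1}{\sqrt n}\big(\widetilde\lambda_i-\tfrac{n-i}{n-k}\widetilde\lambda_k\big)&=\widetilde W_n(s)-\tfrac{n-[ns]}{n-[nt]}\,\widetilde W_n(t).
\end{align*}
Thus the nuisance $\sigma^2$ disappears and the statistics are asymptotically pivotal, $c$ being destined to cancel between numerator and denominator.

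Next I would realise $\mathscr S_n$ and $\mathscr T_n$ as fixed continuous functionals of $(W_n,\widetilde W_n)$. For $\mathscr S_n$, dividing numerator and denominator of~\eqref{eq:statisticS} by $\sqrt n$ replaces each increment by the corresponding expression above and cancels the common factor $c$; the discrete maxima over $k,i$ converge to suprema over $t,s$ because the limit has continuous paths, so that $\mathscr S_n=\Phi_S(W_n,\widetilde W_n)+o_{\prob}(1)$ with $\Phi_S$ the sup-of-ratio functional in~\eqref{eq:limit_distS}. For $\mathscr T_n$ the same $1/\sqrt n$ normalisation of each increment again cancels $c$, while a Riemann-sum passage turns the outer sum $\sum_k$ into $\int_0^1\ud t$ and the inner sums $\sum_i$ into $\int\ud s$, giving $\mathscr T_n=\Phi_T(W_n,\widetilde W_n)+o_{\prob}(1)$ with $\Phi_T$ the integral functional in~\eqref{eq:limit_distT}. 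It then remains to pass to the limit: granted the joint convergence $(W_n,\widetilde W_n)\xrightarrow[n\to\infty]{\dist[0,1]}(c\,\mathcal W,c\,\widetilde{\mathcal W})$ and the continuity of $\Phi_S,\Phi_T$, the continuous mapping theorem delivers~\eqref{eq:limit_distS}--\eqref{eq:limit_distT}, the surviving factors $c$ and $c^2$ cancelling.

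The crux, and where I expect the real work, is the continuity of $\Phi_S,\Phi_T$ at Brownian sample paths, together with the joint convergence. Two issues must be settled. First, the self-normalisers in the denominators must stay bounded away from $0$: on any interior subinterval this is almost sure, since a non-degenerate bridge-type process cannot vanish identically there, but the behaviour as $t\to0$ or $t\to1$ is delicate because there both the numerator $\mathcal W(t)-t\mathcal W(1)$ and the normaliser degenerate simultaneously. This is precisely the advertised \emph{no boundary issue}, and I would resolve it with a law-of-the-iterated-logarithm estimate showing that the ratio remains almost surely bounded as $t\to0,1$, so that the supremum defining $\Phi_S$ is a.s.\ finite and the integrand defining $\Phi_T$ is a.s.\ integrable; the same estimate controls the discretisation error uniformly near the endpoints. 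Second, Proposition~\ref{prop:SWIP} supplies only the two marginal limits, whereas the continuous mapping theorem needs the \emph{joint} law of $(W_n,\widetilde W_n)$; I would recover it from the linearisation of $\lambda_{[nt]}$ and $\widetilde\lambda_{[nt]}$ used in the proof of that proposition, which expresses both as partial sums of one and the same triangular array and thereby forces the limits onto a single Wiener path with $\widetilde{\mathcal W}=\mathcal W(1)-\mathcal W(\cdot)$. With continuity and joint convergence in place, a Skorokhod representation turns the weak convergence into almost-sure convergence and closes the argument.
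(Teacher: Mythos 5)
Your proposal is correct and is essentially the paper's own argument: the paper proves Theorem~\ref{thm:H0} in one line by combining Proposition~\ref{prop:SWIP} with Lemma~1 of \cite{PW2019} and the continuous mapping theorem, and your sketch simply makes explicit what that lemma supplies---the joint convergence of the forward and backward eigenvalue processes along a single Wiener path (read off, as you say, from the common partial-sum linearisation in the proof of the proposition), the cancellation of $\sigma^2$ and of the scale $\phi^2\upsilon/(1+\|\balpha\|_2^2)$ through self-normalisation, and the continuity and discretisation control for the sup- and integral-type functionals. One minor correction: near $t\in\{0,1\}$ the normaliser does not fully degenerate, since one of its two terms converges to the supremum (resp.\ integral) of a non-degenerate bridge-type process while the numerator vanishes, so the limiting ratio tends to zero at the endpoints and your law-of-the-iterated-logarithm estimate is needed only to control the discrete statistic uniformly near the boundary, not the finiteness of the limit functional itself.
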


The null hypothesis is rejected at significance level $\alpha$ for large values of $\mathscr{S}_n$ and $\mathscr{T}_n$. The critical values can be obtained as the $(1-\alpha)$-quantiles of the asymptotic distributions from~\eqref{eq:limit_distS} and~\eqref{eq:limit_distT}. In order to describe limit behavior of the test statistics under the alternative, an~additional \emph{changepoint assumption} is required.

\begin{assumpC}\label{assump:EIVchange}
For some $\zeta\in(0,1)$, as $n\to\infty$,
\begin{equation}\label{eq:C}
\|\bdelta\|_2\to 0\quad\mbox{and}\quad (\eta\kappa-\bvarphi^{\top}\bvarphi)\sqrt{n}\to\infty,
\end{equation}
where $\kappa:=(\bar{\bSigma}_{\btheta,\beps}^{\top}+\bar{\Sigma}_{\beps}\bbeta^{\top})\bDelta_{\zeta}(\bar{\bSigma}_{\btheta,\beps}+\bbeta\bar{\Sigma}_{\beps})+(\bar{\bSigma}_{\btheta,\beps}^{\top}+\bar{\Sigma}_{\beps}(\bbeta+\bdelta)^{\top})\bDelta_{-\zeta}(\bar{\bSigma}_{\btheta,\beps}+(\bbeta+\bdelta)\bar{\Sigma}_{\beps})$, $\bvarphi:=(\bar{\bSigma}_{\btheta}+\bar{\Sigma}_{\btheta,\beps}\bbeta^{\top})\bDelta_{\zeta}(\bar{\bSigma}_{\btheta,\beps}+\bbeta\bar{\Sigma}_{\beps})+(\bar{\bSigma}_{\btheta}+\bar{\Sigma}_{\btheta,\beps}(\bbeta+\bdelta)^{\top})\bDelta_{-\zeta}(\bar{\bSigma}_{\btheta,\beps}+(\bbeta+\bdelta)\bar{\Sigma}_{\beps})$, and $\eta:=\lambda_{min}((\bar{\bSigma}_{\btheta}+\bar{\bSigma}_{\btheta,\beps}\bbeta^{\top})\bDelta(\bar{\bSigma}_{\btheta}+\bbeta\bar{\bSigma}_{\btheta,\beps}^{\top})+\sigma^2\bI_p)-\sigma^2$.
\end{assumpC}

This assumption may be considered as a~changepoint \emph{detectability requirement} for local alternatives, because it manages the relationship between the size of the change, the location of the change, and the noisiness of the data in order to be able to detect the changepoint. In case of uncorrelated error structure, the previous formulae become simpler due to $\bar{\bSigma}_{\btheta,\beps}=\zero$. Assumption~\ref{assump:EIVchange} is automatically fulfilled, for instance, for an~arbitrary $\delta\to 0$ and the one-dimensional equidistant design points $Z_i$'s on $(0,1)$ with homoscedastic error structure, because then $\eta\kappa-\bvarphi^{\top}\bvarphi=\beta^2\{\zeta^3+(1-\zeta)^3\}\{1-\zeta^3-(1-\zeta)^3\}/9+O(\delta)$ as $\delta\to 0$. Furthermore, let us remark that $\bvartheta:=\bar{\bSigma}_{\btheta}+\bbeta\bar{\bSigma}_{\btheta,\beps}^{\top}$ has full rank under Assumption~\ref{assump:EIVmisfit}. 


Now, the tests based on $\mathscr{S}_n$ and $\mathscr{T}_n$ are shown to be \emph{consistent}, as the test statistics converge to infinity under some local alternatives, provided that the size of the change does not convergence to zero too fast, cf.~Assumption~\ref{assump:EIVchange} where $\kappa$ and $\bvarphi$ depend on~$\bdelta$.

\begin{thm}[Under local alternatives]\label{thm:H1}
Let~\ref{eq:M} and~\ref{eq:HA} hold such that $\tau=[n\zeta]$ for some $\zeta\in(0,1)$. Under Assumptions~\ref{assump:EIVchange}, \ref{assump:EIVdesign}, \ref{assump:EIVerrors}, and \ref{assump:EIVmisfit},
\begin{equation}\label{eq:alt}
\mathscr{S}_n\xrightarrow[n\to\infty]{\prob}\infty\xleftarrow[n\to\infty]{\prob}\mathscr{T}_n.
\end{equation}
\end{thm}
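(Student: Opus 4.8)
My plan is to show that under \ref{eq:HA} the centred eigenvalues no longer resemble a Wiener process but acquire a deterministic \emph{drift}, and that this drift---though vanishing, since $\|\bdelta\|_2\to0$---is large enough relative to the self-normalisation to force both statistics to infinity. First I would pin down the drift. Splitting the data into signal plus error, with signal rows $\bZ_{i,\bullet}[\bI_p,\bbeta]$ for $i\leq\tau$ and $\bZ_{i,\bullet}[\bI_p,\bbeta+\bdelta]$ for $i>\tau$, the cross terms are $\Op(n^{-1/2})$ by the zero-mean weakly dependent errors of Assumption~\ref{assump:EIVerrors}, the error Gram matrix satisfies $n^{-1}[\btheta_{[nt]},\beps_{[nt]}]^\top[\btheta_{[nt]},\beps_{[nt]}]\xrightarrow{\prob}t\sigma^2\bSigma$ by a mixing law of large numbers, and the signal Gram matrix converges by Assumption~\ref{assump:EIVdesign} to $\bm{G}(t)=[\bI_p,\bbeta]^\top\bDelta_\zeta[\bI_p,\bbeta]+[\bI_p,\bbeta+\bdelta]^\top(\bDelta_t-\bDelta_\zeta)[\bI_p,\bbeta+\bdelta]$ for $t\geq\zeta$. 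Continuity of the smallest eigenvalue then gives $n^{-1}\lambda_{[nt]}\xrightarrow{\prob}t\sigma^2+d(t)$ with $d(t):=\lambda_{min}(\bSigma^{-1/2}\bm{G}(t)\bSigma^{-1/2})$, and likewise for $\widetilde\lambda$. The structural facts I would record are that $d$ vanishes on $[0,\zeta]$ (there the signal lives in the $p$-dimensional row space of $[\bI_p,\bbeta]$, so $\bm{G}$ has rank $p$) and is strictly positive on $(\zeta,1]$, with $d(\zeta)=0$.

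Next I would measure the size of $d(1)$. A Schur-complement evaluation of the smallest eigenvalue of $\bSigma^{-1/2}\bm{G}(1)\bSigma^{-1/2}+\sigma^2\bI_{p+1}$---whose leading $p\times p$ block is $\bP+\sigma^2\bI_p$, whose off-diagonal block is $\bvarphi$, and whose trailing scalar is $\kappa+\sigma^2$, with $\bP$, $\bvarphi$, $\kappa$ precisely the blocks of Assumption~\ref{assump:EIVchange}---shows that, because the smallest eigenvalue lies near $\sigma^2$ while the leading block has eigenvalues at least $\eta+\sigma^2$, the smallest characteristic root obeys $d(1)=\kappa-\bvarphi^\top\bP^{-1}\bvarphi+\op(d(1))$. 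The bound $\bvarphi^\top\bP^{-1}\bvarphi\leq\|\bvarphi\|_2^2/\eta$ then yields $d(1)\geq(\eta\kappa-\bvarphi^\top\bvarphi)/\eta$ up to lower-order terms, so Assumption~\ref{assump:EIVchange} delivers the one fact driving everything: $\sqrt n\,d(1)\to\infty$ (whereas $\|\bdelta\|_2\to0$ forces $d(1)\to0$).

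For $\mathscr{S}_n$ I would bound below by the $k=\tau=[n\zeta]$ summand. Its numerator is $|\lambda_\tau-\tfrac{\tau}{n}\lambda_n|=n\zeta\,d(1)(1+\op(1))$, which dominates its $\Op(\sqrt n)$ stochastic error exactly because $\sqrt n\,d(1)\to\infty$; and both self-normalisers at $k=\tau$ are maxima over data lying in a \emph{single} regime ($[\bX_\tau,\bY_\tau]$ obeys \ref{eq:H0} with $\bbeta$ and $[\bX_{-\tau},\bY_{-\tau}]$ obeys it with $\bbeta+\bdelta$), so Proposition~\ref{prop:SWIP} applied to these subsamples makes each an $\Op(\sqrt n)$ bridge maximum. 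Hence $\mathscr{S}_n\geq n\zeta\,d(1)(1+\op(1))/\Op(\sqrt n)\xrightarrow{\prob}\infty$.

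For $\mathscr{T}_n$ a single term will not suffice: at $k=\tau$ its self-normaliser is an $\Op(n^2)$ sum of squares, so that term is only $\Op(d(1)^2)=\op(1)$, and the divergence is instead a boundary-layer effect. Summing the nonnegative terms over a window $k\in(\tau,\tau+w_n]$ with $w_n\asymp\sqrt n/d(1)=o(n)$, the straddling fraction $(k-\tau)/n\to0$ keeps the drift out of both self-normalising sums (the right one even stays in a single regime), so every denominator remains $\Op(n^2)$ while every numerator stays drift-dominated with $(\lambda_k-\tfrac{k}{n}\lambda_n)^2\geq n^2\zeta^2 d(1)^2(1+\op(1))$; each summand is therefore $\gtrsim d(1)^2$, and the $w_n$ of them sum to $\mathscr{T}_n\gtrsim w_n d(1)^2\asymp\sqrt n\,d(1)\xrightarrow{\prob}\infty$. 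The main obstacle is the uniform-in-$k$ control demanded by this window argument---that the sum-of-squares self-normalisers stay $\Op(n^2)$ while the numerators stay drift-dominated---which rests on a uniform maximal inequality and strong law for the smallest eigenvalues of the non-stationary $\alpha$-mixing arrays; the Schur-complement estimate linking the geometric drift $d(1)$ to the algebraic detectability quantity $\eta\kappa-\bvarphi^\top\bvarphi$ is a secondary technical point.
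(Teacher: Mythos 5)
Your proposal shares the paper's skeleton---quantify a deterministic drift in $\lambda_n-n\sigma^2$ of order $n\,d(1)$ with $\sqrt{n}\,d(1)\to\infty$, lower-bound the statistics at (or near) $k=\tau$, and use Proposition~\ref{prop:SWIP} on the two clean subsamples to keep the self-normalisers at the $\sqrt{n}$ bridge scale---and your $\mathscr{S}_n$ argument is essentially identical to the paper's. Two genuine differences. First, the drift quantification: the paper bounds the smallest eigenvalue of the partitioned Gram matrix by the block inequality in~\eqref{eq:lower}, arriving at $\liminf_n\lambda_{min}(\tfrac1n\cdot)-\sigma^2\geq 2(\eta\kappa-\bvarphi^{\top}\bvarphi)/\{\kappa+\eta+\sqrt{(\kappa-\eta)^2+4\bvarphi^{\top}\bvarphi}\}$ and hence~\eqref{eq:delta}; your Schur-complement (secular-equation) evaluation $d(1)\approx\kappa-\bvarphi^{\top}\bP^{-1}\bvarphi\geq(\eta\kappa-\bvarphi^{\top}\bvarphi)/\eta$ is precisely the sharpened route the paper defers to Remark~\ref{rmk:sharp}, so this variant is legitimate and in fact slightly stronger. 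Second, and more substantively, the $\mathscr{T}_n$ half: the paper bounds $\mathscr{T}_n$ below by the single $k=\tau$ summand and asserts divergence ``by similar arguments'' as for $\mathscr{S}_n$, whereas your scaling computation---numerator $\asymp n^2 d(1)^2$ against a denominator that is an $\Op(n^2)$ sum of squared bridges, so the summand is only $\Op(d(1)^2)$---is correct and shows that the single-summand bound cannot by itself force $\mathscr{T}_n\to\infty$ (the max-type normaliser of $\mathscr{S}_n$ lives at scale $\sqrt n$, the sum-of-squares normaliser of $\mathscr{T}_n$ at scale $n^2$, which is exactly why the ``similar argument'' does not transfer). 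Your aggregation over the shrinking window $k\in(\tau,\tau+w_n]$ with $w_n\asymp\sqrt n/d(1)$, yielding $\mathscr{T}_n\gtrsim w_n d(1)^2=\sqrt n\,d(1)\to\infty$, supplies the missing accumulation; the price is the uniform-in-$k$ control you yourself flag, which needs the local drift slope just beyond $\zeta$ to be $O(\|\bdelta\|_2^2)$ (so the straddle contribution $w_n\cdot O(\|\bdelta\|_2^2)\asymp\sqrt n$ stays at the noise scale, keeping numerators drift-dominated and denominators $\Op(n^2)$) together with maximal bounds from the SWIP on each clean segment---real but fillable work given Proposition~\ref{prop:SWIP} and the eigenvalue perturbation bound $|\lambda_{min}(\bA)-\lambda_{min}(\bB)|\leq\|\bA-\bB\|_F$ already used in the proof of Corollary~\ref{cor:est}. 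In short: where your proof coincides with the paper it is sound, and where it deviates it is more careful than the published one-line treatment of $\mathscr{T}_n$.
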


Assumption~\ref{assump:EIVchange} can be sharpened as remarked below with the corresponding proof in the Appendix~\ref{sec:proofs}.

\begin{remark}\label{rmk:sharp}
The second part of relation~\eqref{eq:C} can be replaced by
\begin{equation}\label{eq:Csharp}
\{\kappa+\eta-\sqrt{(\kappa+2\sigma^2+\eta)^2-4(\kappa+\sigma^2-\bvarphi^{\top}(\bvartheta^{\top}\bDelta\bvartheta+\sigma^2\bI_p)^{-1}\bvarphi)(\sigma^2+\eta)}\}\sqrt{n}\to\infty
\end{equation}
and the assertion of Theorem~\ref{thm:H1} still holds.
\end{remark}

Basically, Theorem~\ref{thm:H1} discloses that in presence of the structural change in linear relations, the test statistics \emph{explode above all bounds}. Hence, the asymptotic distributions from Theorem~\ref{thm:H0} can be used to construct the tests. Although, explicit forms of those distributions stated in~\eqref{eq:limit_distS} and~\eqref{eq:limit_distT} are unknown.

\subsection{Asymptotic critical values}\label{subsec:crit}
The critical values may be determined by simulations from the limit distributions $\mathscr{S}_n$ and $\mathscr{T}_n$ from Theorem~\ref{thm:H0}. Theorem~\ref{thm:H1} ensures that we reject the null hypothesis for large values of the test statistics. We have simulated the asymptotic distributions~\eqref{eq:limit_distS} and~\eqref{eq:limit_distT} by \emph{discretizing} the standard Wiener process and using the relationship of a~random walk to the standard Wiener process. We considered $1000$ as the number of discretization points within $[0,1]$ interval and the number of simulation runs equals to $100000$. In Table~\ref{tab:crit_val}, we present several critical values for the test statistics~$\mathscr{S}_n$ and~$\mathscr{T}_n$.

\begin{table}
	\caption{\label{tab:crit_val}Simulated asymptotic critical values for~$\mathscr{S}_n$ and~$\mathscr{T}_n$}
	\centering
		\begin{tabular}{rrrrrr}
			\toprule
			$100(1-\alpha)\%$ & $90\%$ & $95\%$ & $97.5\%$ & $99\%$ & $99.5\%$ \\
			\midrule
			$\mathscr{S}$-based & $1.209008$ & $1.393566$ & $1.571462$ & $1.782524$ & $1.966223$\\
			$\mathscr{T}$-based & $5.700222$ & $7.165705$ & $8.807070$ & $10.597625$ & $11.755233$\\
			\bottomrule
		\end{tabular}
\end{table}

\subsection{Changepoint estimator}
If a~change is \emph{detected}, it is of interest to estimate the changepoint. It is sensible to use
\begin{equation*}
\hat{\tau}_n:=\mathop{\operatorname{argmax}}_{1\leq k \leq n-1}\frac{\big|\lambda_k-\frac{k}{n}\lambda_n\big|+\big|\widetilde{\lambda}_k-\frac{n-k}{n}\widetilde{\lambda}_0\big|}{\max_{1\leq i< k}\big|\lambda_i-\frac{i}{k}\lambda_k\big|+\max_{k< i\leq n}\big|\widetilde{\lambda}_{i}-\frac{n-i}{n-k}\widetilde{\lambda}_{k}\big|}
\end{equation*}
as a~\emph{changepoint estimator}. Our next theorem shows that under the alternative, the changepoint $\tau$ is consistently estimated by the estimator $\hat{\tau}_n$.

\begin{corollary}[Consistency]\label{cor:est}
Let the assumptions of Theorem~\ref{thm:H1} hold. If
\begin{align}
&\forall t\in(\zeta,1):\,\{\eta(t)\kappa(t)-\bvarphi(t)^{\top}\bvarphi(t)\}\sqrt{n}\xrightarrow{n\to\infty}\infty;\label{eq:Cpt}\\
&\forall t\in(0,\zeta):\,\{\tilde{\eta}(t)\tilde{\kappa}(t)-\tilde{\bvarphi}(t)^{\top}\tilde{\bvarphi}(t)\}\sqrt{n}\xrightarrow{n\to\infty}\infty,\label{eq:Cpt2}
\end{align}
where $\kappa(t):=(\bar{\bSigma}_{\btheta,\beps}^{\top}+\bar{\Sigma}_{\beps}\bbeta^{\top})\bDelta_{\zeta}(\bar{\bSigma}_{\btheta,\beps}+\bbeta\bar{\Sigma}_{\beps})+(\bar{\bSigma}_{\btheta,\beps}^{\top}+\bar{\Sigma}_{\beps}(\bbeta+\bdelta)^{\top})(\bDelta_{t}-\bDelta_{\zeta})(\bar{\bSigma}_{\btheta,\beps}+(\bbeta+\bdelta)\bar{\Sigma}_{\beps})$, $\bvarphi(t):=(\bar{\bSigma}_{\btheta}+\bar{\Sigma}_{\btheta,\beps}\bbeta^{\top})\bDelta_{\zeta}(\bar{\bSigma}_{\btheta,\beps}+\bbeta\bar{\Sigma}_{\beps})+(\bar{\bSigma}_{\btheta}+\bar{\Sigma}_{\btheta,\beps}(\bbeta+\bdelta)^{\top})(\bDelta_{t}-\bDelta_{\zeta})(\bar{\bSigma}_{\btheta,\beps}+(\bbeta+\bdelta)\bar{\Sigma}_{\beps})$, $\tilde{\kappa}(t):=(\bar{\bSigma}_{\btheta,\beps}^{\top}+\bar{\Sigma}_{\beps}\bbeta^{\top})\bDelta_{-\zeta}(\bar{\bSigma}_{\btheta,\beps}+\bbeta\bar{\Sigma}_{\beps})+(\bar{\bSigma}_{\btheta,\beps}^{\top}+\bar{\Sigma}_{\beps}(\bbeta+\bdelta)^{\top})(\bDelta_{-t}-\bDelta_{-\zeta})(\bar{\bSigma}_{\btheta,\beps}+(\bbeta+\bdelta)\bar{\Sigma}_{\beps})$, $\tilde{\bvarphi}(t):=(\bar{\bSigma}_{\btheta}+\bar{\Sigma}_{\btheta,\beps}\bbeta^{\top})\bDelta_{-\zeta}(\bar{\bSigma}_{\btheta,\beps}+\bbeta\bar{\Sigma}_{\beps})+(\bar{\bSigma}_{\btheta}+\bar{\Sigma}_{\btheta,\beps}(\bbeta+\bdelta)^{\top})(\bDelta_{-t}-\bDelta_{-\zeta})(\bar{\bSigma}_{\btheta,\beps}+(\bbeta+\bdelta)\bar{\Sigma}_{\beps})$, $\eta(t):=\lambda_{min}((\bar{\bSigma}_{\btheta}+\bar{\bSigma}_{\btheta,\beps}\bbeta^{\top})\bDelta_t(\bar{\bSigma}_{\btheta}+\bbeta\bar{\bSigma}_{\btheta,\beps}^{\top})+t\sigma^2\bI_p)-t\sigma^2$, and $\tilde{\eta}(t):=\lambda_{min}((\bar{\bSigma}_{\btheta}+\bar{\bSigma}_{\btheta,\beps}\bbeta^{\top})\bDelta_{-t}(\bar{\bSigma}_{\btheta}+\bbeta\bar{\bSigma}_{\btheta,\beps}^{\top})+(1-t)\sigma^2\bI_p)-(1-t)\sigma^2$, then
\[
\frac{\hat{\tau}_n}{n}\xrightarrow[n\to\infty]{\prob}\zeta.
\]
\end{corollary}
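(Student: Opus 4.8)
The plan is to study the deterministic leading-order profile of the ratio that $\hat{\tau}_n$ maximises, reusing the expansions that drive the proof of Theorem~\ref{thm:H1}; denote that ratio by $R(k/n)$. The organising principle is a dichotomy for the partial-sample smallest eigenvalues: writing $k=[nt]$, whenever the contiguous block of rows entering $\lambda_k$ or $\widetilde{\lambda}_k$ lies inside a single regime (i.e.\ does not straddle $\tau=[n\zeta]$), Proposition~\ref{prop:SWIP} gives that its centred smallest eigenvalue is only $\Op(\sqrt{n})$; whereas a block that does straddle $\tau$ contributes, after eigenvalue perturbation of the corresponding limiting block matrix, a deterministic excess of exact order $n\rho_n(\cdot)$, where $\rho_n$ is the positive quantity whose closed form is the bracket in~\eqref{eq:Csharp} and whose leading term as $\|\bdelta\|_2\to 0$ is $\eta(\cdot)\kappa(\cdot)-\bvarphi(\cdot)^{\top}\bvarphi(\cdot)$ on $(\zeta,1)$ and $\tilde{\eta}(\cdot)\tilde{\kappa}(\cdot)-\tilde{\bvarphi}(\cdot)^{\top}\tilde{\bvarphi}(\cdot)$ on $(0,\zeta)$. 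Assumption~\ref{assump:EIVchange} together with~\eqref{eq:Cpt}--\eqref{eq:Cpt2} is precisely the statement that $\sqrt{n}\rho_n\to\infty$ on every sub-interval straddling $\zeta$, so that on such blocks the deterministic excess dominates the $\Op(\sqrt{n})$ fluctuation.

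First I would evaluate $R$ at $k=[n\zeta]$. There both self-normalising maxima in the denominator run over sub-samples that avoid $\tau$—the left blocks $[\bX_i,\bY_i]$ with $i<[n\zeta]$ and the right blocks on rows $i>[n\zeta]$—so the denominator is $\Op(\sqrt{n})$, while $\lambda_{[n\zeta]}-\frac{[n\zeta]}{n}\lambda_n$ and $\widetilde{\lambda}_{[n\zeta]}-\frac{n-[n\zeta]}{n}\widetilde{\lambda}_0$ each inherit the full-sample excess through $\lambda_n$ and $\widetilde{\lambda}_0=\lambda_n$, yielding a numerator of order $n\rho_n(1)$. Hence $R([n\zeta]/n)$ is of order $\sqrt{n}\rho_n(1)$ times a tight factor and diverges to $\infty$ in probability by~\eqref{eq:C}. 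Next I would bound $R$ away from the change: for $|k/n-\zeta|>\eps$ at least one of the two denominator maxima is taken over a block that does straddle $\tau$ (the left block when $t>\zeta$, the right block when $t<\zeta$), so the denominator is itself of order $n\rho_n(t)$, matching the numerator, and the ratio is $\Op(1)$. Using the uniform (Skorokhod) tightness of $\{\lambda_{[nt]}\}_t$ and $\{\widetilde{\lambda}_{[nt]}\}_t$ and the uniform lower bound $\inf_{t}\sqrt{n}\rho_n(t)\to\infty$ implied by~\eqref{eq:Cpt}--\eqref{eq:Cpt2}, this bound upgrades to $\sup_{|k/n-\zeta|>\eps}R(k/n)=\Op(1)$.

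With these two facts consistency follows by a standard argmax argument. Since $\sup_{|k/n-\zeta|>\eps}R(k/n)=\Op(1)$ whereas $R([n\zeta]/n)\xrightarrow{\prob}\infty$, for every fixed $\eps>0$ the event that the global maximiser lies in $\{|k/n-\zeta|>\eps\}$ has probability tending to zero, because the candidate index $[n\zeta]$ (which lies within $\eps$ of $\zeta$ for large $n$) already exceeds every far value. Hence $\prob(|\hat{\tau}_n/n-\zeta|<\eps)\to 1$, and since $\eps>0$ is arbitrary, $\hat{\tau}_n/n\xrightarrow{\prob}\zeta$. No control of the transition zone $\{|k/n-\zeta|\le\eps\}$ is needed for this conclusion.

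The hard part will be two intertwined technical points. The first is deriving the exact straddling excess $\rho_n(t)$ by perturbing the smallest eigenvalue of the limiting object of type $\bvartheta^{\top}\bDelta_t\bvartheta+\sigma^2\bI_p$: this is the computation that turns the Schur-complement expression into the closed form of Remark~\ref{rmk:sharp}, and one must verify that its sign and leading order are governed by $\eta(t)\kappa(t)-\bvarphi(t)^{\top}\bvarphi(t)$ (and its tilded analogue), so that~\eqref{eq:Cpt}--\eqref{eq:Cpt2} are indeed the correct detectability requirements. The second, and more delicate, is uniformity: all the $\Op(\sqrt{n})$ and $\op$ remainders coming from the SWIP must hold uniformly in $k$, so that the inner suprema over $i$ in the denominator and the outer maximisation over $k$ can be handled simultaneously, and the pointwise conditions~\eqref{eq:Cpt}--\eqref{eq:Cpt2} must be promoted—using the monotonicity of $t\mapsto\bDelta_t$—to a uniform lower bound on the straddling excess over $\{|t-\zeta|>\eps\}$. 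Securing this uniform tightness near the endpoints and across the straddling/non-straddling transition is where the real work lies.
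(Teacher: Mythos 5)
Your proposal follows essentially the same route as the paper's proof of Corollary~\ref{cor:est}: the SWIP (Proposition~\ref{prop:SWIP}) controls non-straddling blocks at the $\Op(\sqrt{n})$ scale, the Schur-complement eigenvalue bound from the proof of Theorem~\ref{thm:H1} yields the deterministic excess of order $n\{\eta(t)\kappa(t)-\bvarphi(t)^{\top}\bvarphi(t)\}$ (resp.\ the tilded analogue) on blocks straddling $\tau$, and an argmax comparison between $k=[n\zeta]$ and $\{k:|k/n-\zeta|>\eps\}$ concludes, with the same uniformity caveats the paper itself treats briefly. The only difference is normalization: the paper scales the numerator by $1/n$ and the denominator by $1/\sqrt{n}$, so the ratio at $\tau$ has a nondegenerate positive limit while the far ratios vanish at rate $1/\{|\eta(t)\kappa(t)-\bvarphi(t)^{\top}\bvarphi(t)|\sqrt{n}\}$, the two regimes being separated by a slowly vanishing threshold $d_n$, whereas your raw ratio diverges at $[n\zeta]$ and stays $\Op(1)$ elsewhere---the same dichotomy rescaled.
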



Conditions~\eqref{eq:Cpt} and~\eqref{eq:Cpt2} serve as a~uniform intermediary between the size of the change, the location of the change, the sample size, and the heteroscedasticity of the disturbances for assuring changepoint estimator's consistency. These assumptions are again automatically fulfilled for the case discussed below Assumption~\ref{assump:EIVchange}.

In order to estimate more than one changepoint, it is possible to use an~arbitrary `divide-and-estimate' \emph{multiple changepoints} method relying on our changepoint estimator, for instance, wild binary segmentation by~\cite{F2014}.

\section{Simulation study}\label{sec:simulations}
We are interested in the performance of the tests based on the self-normalized test statistics $\mathscr{S}_n$ and $\mathscr{T}_n$ that are completely nuisance-parameter-free. We focused on the comparison of the \emph{accuracy of critical values} obtained by the simulation from the limit distributions.

In Figures~\ref{fig:H0}--\ref{fig:H12D}, one may see \emph{size-power plots} considering the test statistics $\mathscr{S}_n$ and $\mathscr{T}_n$ under the null hypothesis and under the alternative. Figures~\ref{fig:H0} and~\ref{fig:H1} correspond to one input covariate (i.e., $p=1$) with choices of $\beta=1$ and $Z_{i,1}=100i/(n+1)$. A~case with two error-prone regressors (i.e., $p=2$) is illustrated in Figures~\ref{fig:H02D} and~\ref{fig:H12D} for choices of $\bbeta=[1,1]^{\top}$ and $Z_{i,\bullet}=100\times[i/(n+1),(i/(n+1))^{3/2}]$. Next, $n\in\{200,1000\}$ and $\tau\in\{n/4,n/2\}$. The size of change is $\delta\in\{0.1,0.5\}$ for $p=1$ and $\bdelta\in\{[0.1,0.1]^{\top},[0.5,0.5]^{\top}\}$ for $p=2$. Especially smaller values of the break should represent the situations under the local alternatives. In Figures~\ref{fig:H0} and~\ref{fig:H02D}, the empirical rejection frequency under the null hypothesis (actual $\alpha$-errors) is plotted against the theoretical size (theoretical $\alpha$-errors with $\alpha\in\{1\%,5\%,10\%\}$), illustrating the size of the tests. The ideal situation under the null hypothesis is depicted by the straight diagonal dotted line. The empirical rejection frequencies ($1-$errors of the second type) under the alternative (with different changepoints and values of the change) are shown in Figures~\ref{fig:H1} and~\ref{fig:H12D}, illustrating the power of the tests. Under the alternative, the desired situation would be a~steep function with values close to~1. For more details on the size-power plots we may refer, e.g., to~\cite{kir2006}. The standard deviation of the random disturbances was set to $\sigma\in\{0.5,1.0\}$ and the random error terms $\{\Theta_{n,1}\}_{n=1}^{\infty},\ldots,\{\Theta_{n,p}\}_{n=1}^{\infty}$, and $\{\eps_n\}_{n=1}^{\infty}$ were independently simulated as three time series:
\begin{itemize}
	\setlength\itemsep{-0.1cm}
	\item \textsf{IID} \ldots~independent and identically distributed random variables;
	\item \textsf{AR(1)} \ldots~autoregressive (AR) process of order one having a~coefficient of autoregression equal $0.5$;
	\item \textsf{ARCH(1)} \ldots~autoregressive conditional heteroscedasticity (ARCH) process with the second coefficient equal $0.5$.
\end{itemize}

The standard normal distribution and the Student $t$-distribution with 3~degrees of freedom are used for generating the innovations of the models' errors. All of the time series are standardized such that they have variance equal~$\sigma^2$. Let us remark that the setup of Student $t_3$-distribution does not satisfy Assumption~\ref{assump:EIVerrors}. However, it can be considered as a~misspecified model and one would like to inspect performance of our procedures on such a~model that violates our assumptions. In the simulations of the rejection rates, we used $10000$ repetitions.

\begin{figure}[!ht]
	\centering
	\includegraphics[width=0.9\textwidth]{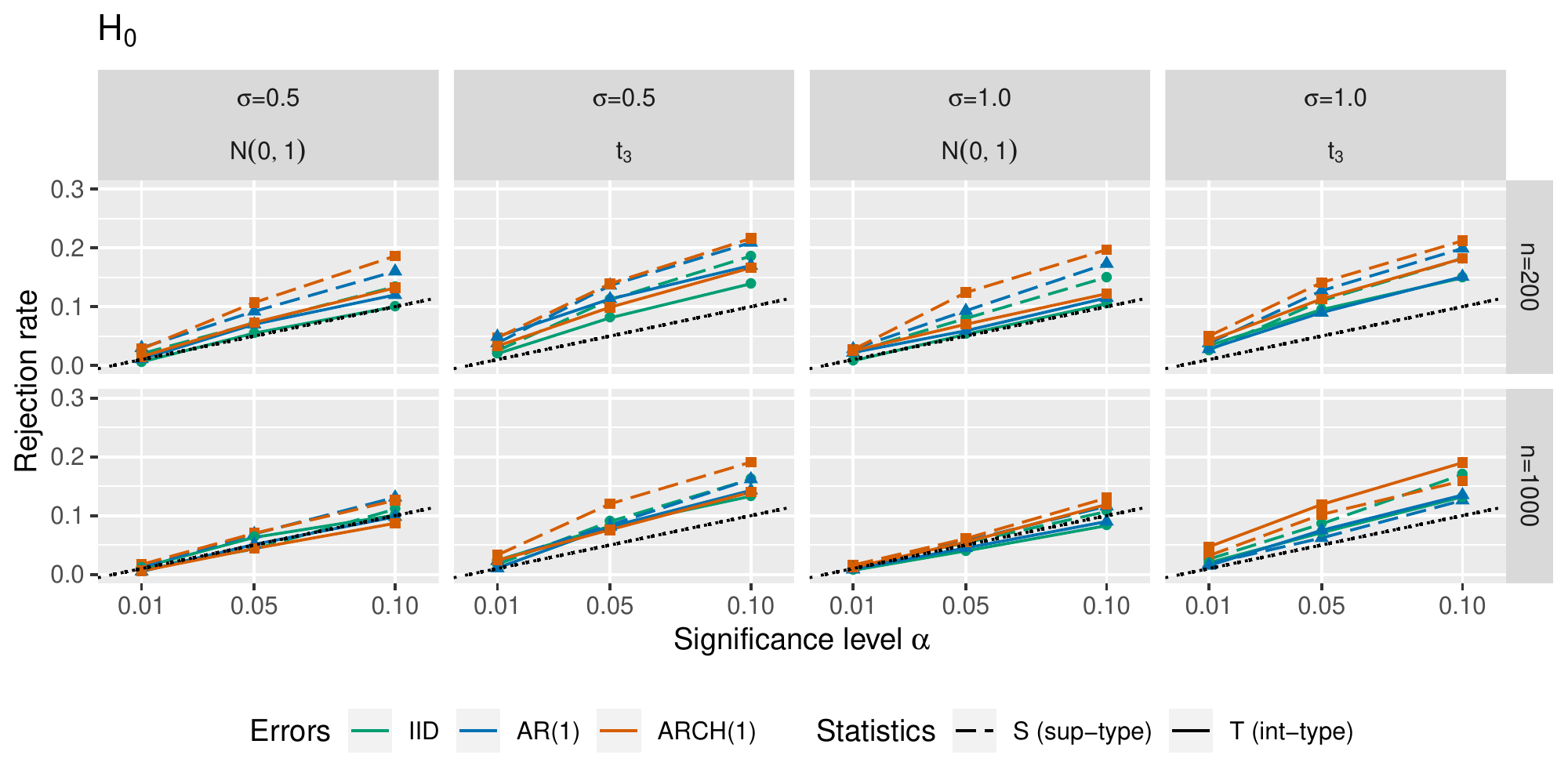}
	\caption{\label{fig:H0}Size-power plots for $\mathscr{S}_n$ and $\mathscr{T}_n$ under~\ref{eq:H0} ($p=1$)}
\end{figure}

\begin{figure}[!ht]
	\centering
		\includegraphics[width=0.9\textwidth]{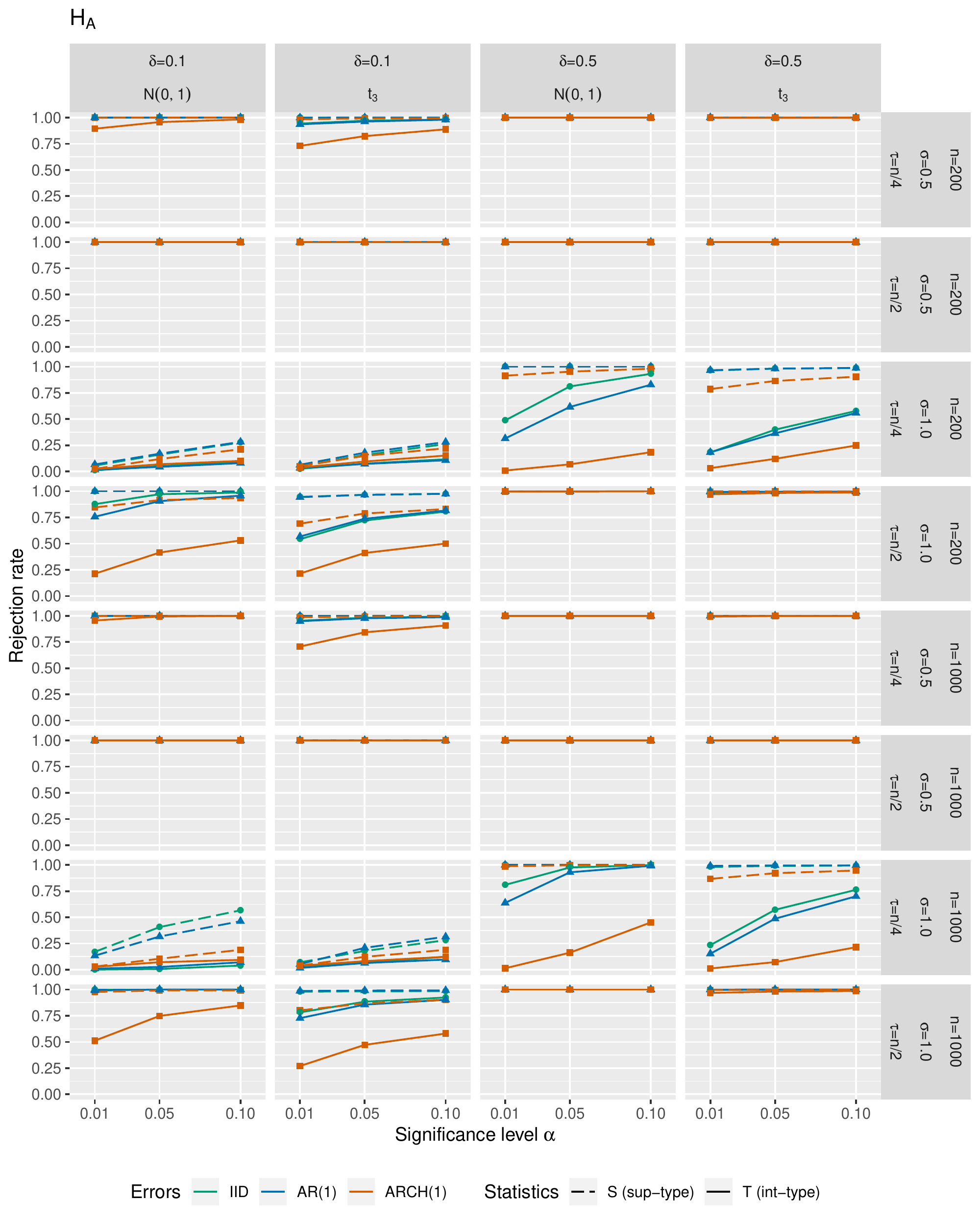}
		\caption{\label{fig:H1}Size-power plots for $\mathscr{S}_n$ and $\mathscr{T}_n$ under~\ref{eq:HA} ($p=1$)}
\end{figure}

\begin{figure}[!ht]
	\centering
	\includegraphics[width=0.9\textwidth]{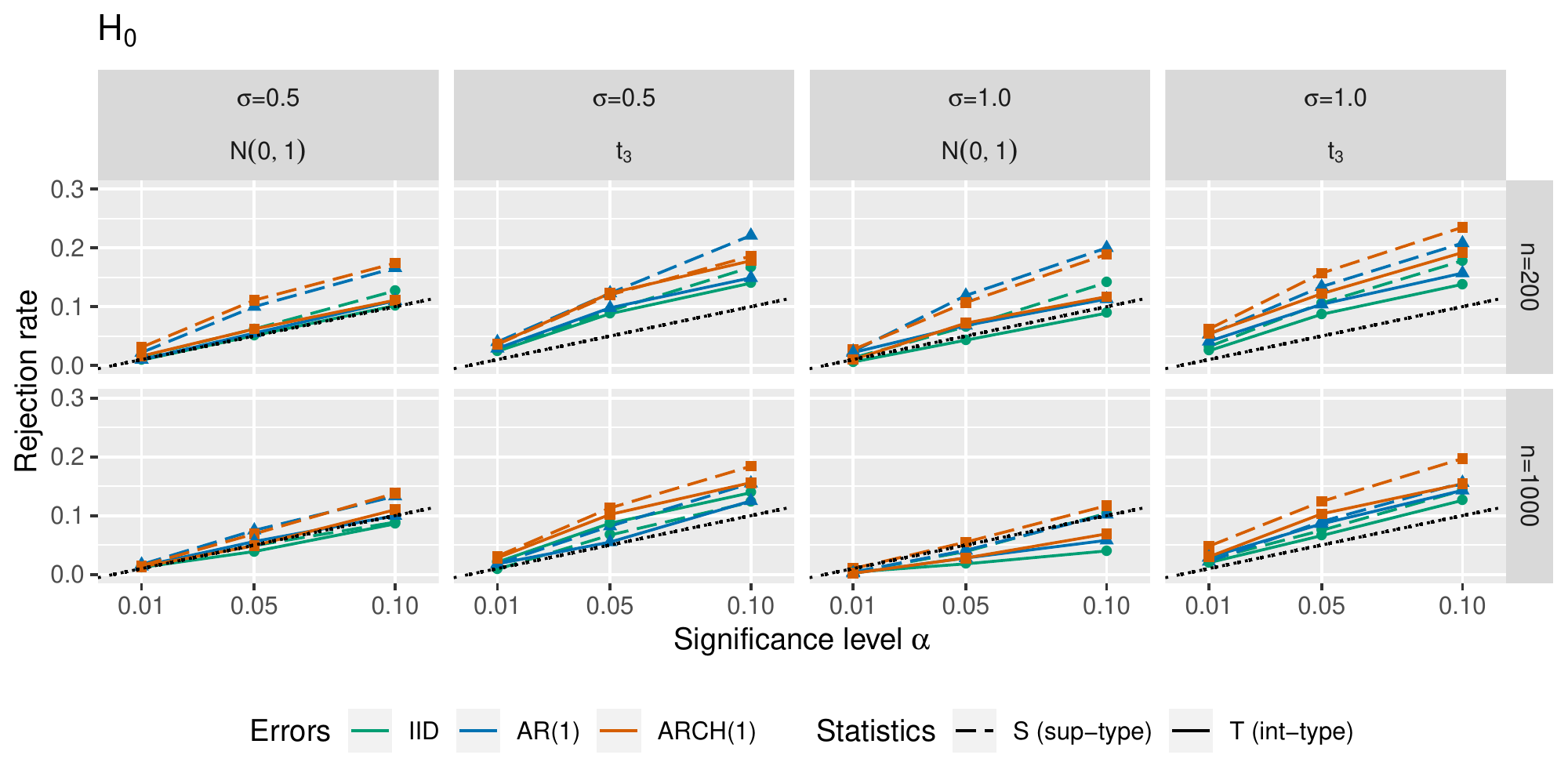}
	\caption{\label{fig:H02D}Size-power plots for $\mathscr{S}_n$ and $\mathscr{T}_n$ under~\ref{eq:H0} ($p=2$)}
\end{figure}

\begin{figure}[!ht]
	\centering
	\includegraphics[width=0.9\textwidth]{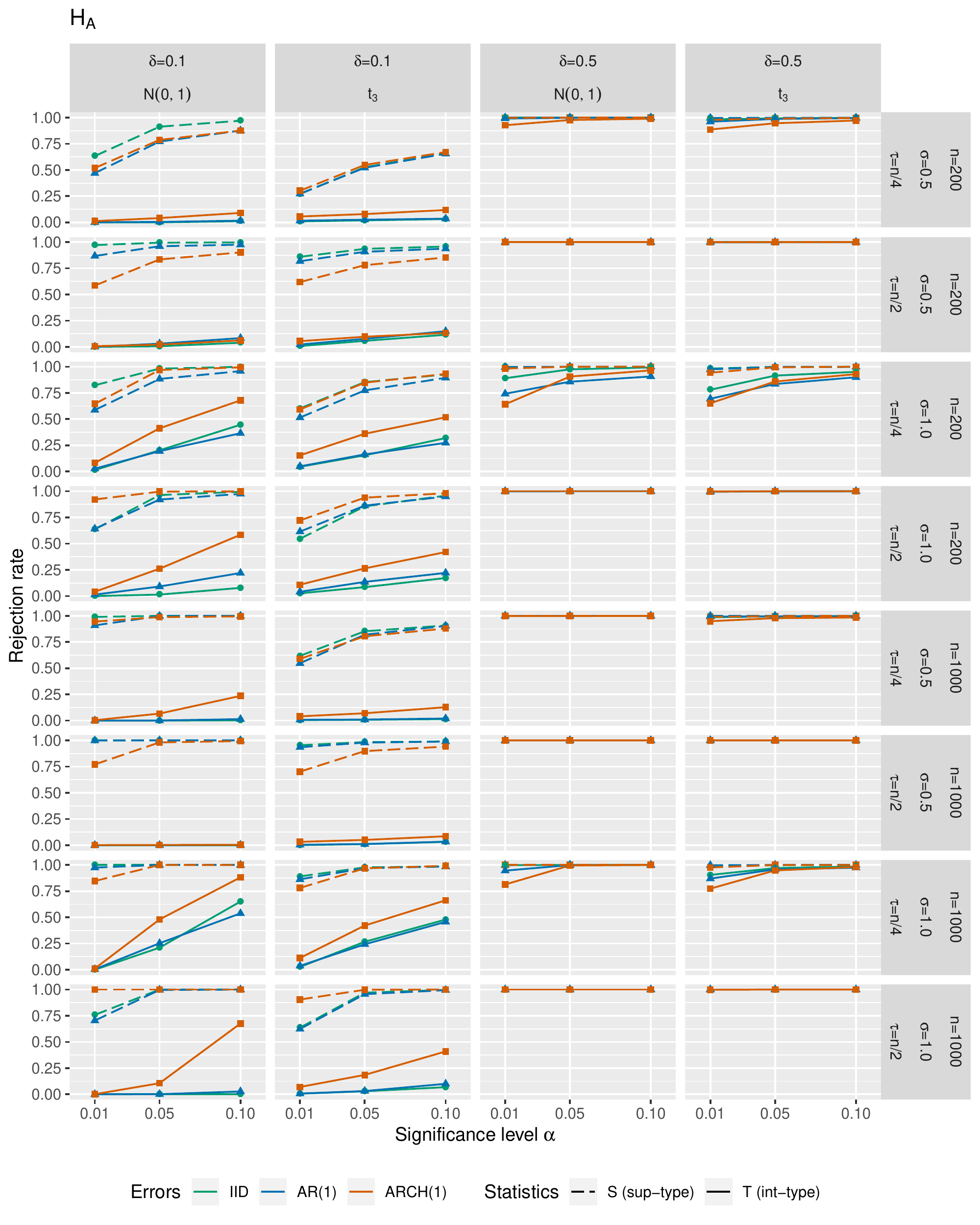}
	\caption{\label{fig:H12D}Size-power plots for $\mathscr{S}_n$ and $\mathscr{T}_n$ under~\ref{eq:HA} ($p=2$)}
\end{figure}

In all of the subfigures of Figures~\ref{fig:H0} and~\ref{fig:H02D} depicting a~situation under the null hypothesis, we may see that comparing the accuracy of $\alpha$-levels (sizes) for different self-normalized test statistics, the integral-type ($\mathscr{T}$-based) method seems to keep the theoretical significance level more firmly than the supre\-mum-type ($\mathscr{S}$-based) method. Comparing the case of $\mathsf{N}(0,1)$ innovations with the case of~$t_3$ innovations, the rejection rates under the null tend to be slightly higher for the~$t_3$-distribution. In spite of the fact that the $t_3$-distributed errors violate Assumption~\ref{assump:EIVerrors}, the performance of our tests is still surprisingly satisfactory in such case. As expected, the accuracy of the critical values tends to be better for larger~$n$. The more complicated dependence structure of errors is assumed, the worse performance of the tests is obtained. Furthermore, the less volatile errors are set, the better tests' sizes are attained.

The $\mathscr{T}$-method performs better under the null. However under the alternative, the $\mathscr{S}$-method has a~tendency to have slightly higher power than the $\mathscr{T}$-method (see Figures~\ref{fig:H1} and~\ref{fig:H12D}). We may also conclude that under~\ref{eq:HA} with less volatile errors, the power of the test increases. The power decreases when the changepoint is closer to the beginning or the end of the input-output data. The heavier tails ($t_3$ against $\mathsf{N}(0,1)$) give worse results in general for both test statistics. Moreover, `more dependent' scenarios reveal worsening of the test statistics' performance. Furthermore, the smaller size of the change is considered, the lower power of the test is achieved. And again, the power gets higher for larger~$n$.




Afterwards, a~simulation experiment is performed to study the \emph{finite sample} properties of the changepoint estimator for a~change in the linear relations' parameter. We numerically present only the case of $p=1$. In particular, the interest lies in the \emph{empirical distributions} of the proposed estimator visualized via boxplots, see Figure~\ref{fig:Estimator}. The simulation setup is kept the same as described above.

\begin{figure}[!ht]
	\centering
		\includegraphics[width=0.9\textwidth]{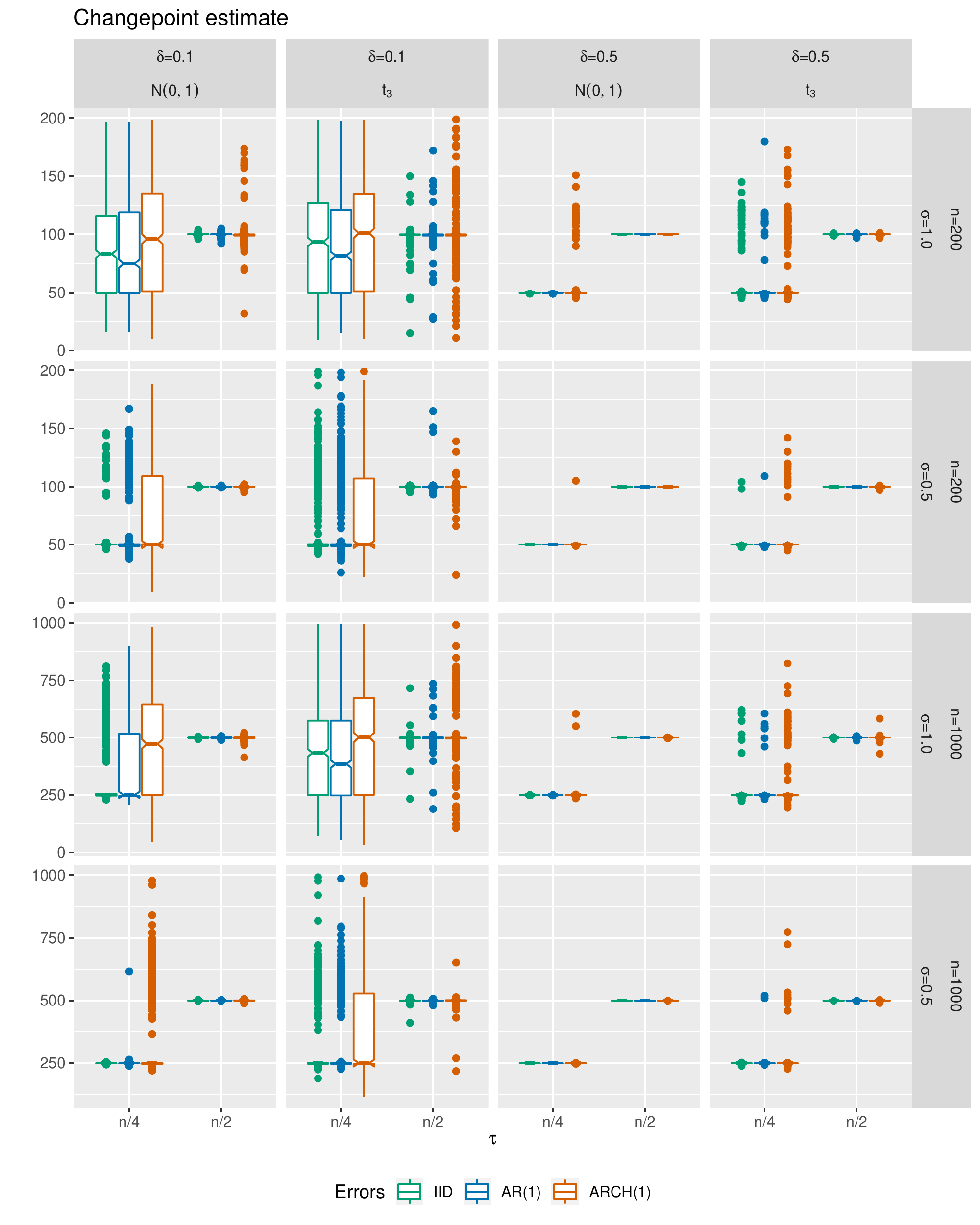}
		\caption{\label{fig:Estimator}Boxplots of the estimated changepoint $\hat{\tau}_n$ ($p=1$)}
\end{figure}

It can be concluded that the precision of our changepoint estimate is satisfactory even for relatively small sample sizes regardless of the errors' structure. Less volatile model errors provide more precise changepoint estimate. The less complicated dependence structure is assumed, the higher accuracy of the estimator is obtained. Furthermore, the disturbances with heavier tails yield less precise estimates than innovations with light tails. One may notice that higher precision is obtained when the changepoint is closer to the middle of the data. It is also clear that the precision of $\hat{\tau}_n$ improves markedly as the size of change increases.

\section{Applications}\label{sec:applications}

\subsection{Practical application: Calibration}\label{subsec:Papplication}
A~company has two industrial devices, where the first one is calibrated according to some institute of standards and the second one is just a~casual device. We want to test whether the second device is calibrated according to the first one. In this \emph{calibration problem}, it means to know whether the second device has approximately the \emph{same performance up to some unknown multiplication constant} as the first one. Consequently, other devices of the same type are needed to be calibrated as well. For some reasons, e.g., economic or logistic, it is only possible to calibrate one device by the official authorities.

Our data set, provided by a~Czech steelmaker, contains 100 couples of speed values of two hammer rams (see Figure~\ref{fig:Presser}), where the first forging hammer is calibrated. We set the same power level on both hammers and measure the speed of each hammer ram repeatedly changing only the power level. Our measurements of the speed are encumbered with errors of the same variability in both cases, because we use the same device for measuring the speed and both forging hammers are of the same type. Since the power set for the forging hammer is directly proportional to the speed of the hammer ram, our goal is to test whether the \emph{ratio of two hammer rams' speeds is kept constant} over changing the power level or not. Therefore, our changepoint in the EIV model is very suitable for this setup---a~linear dependence and errors in both measured speeds (with the same variance).

Both our changepoint tests---$\mathscr{S}_n=83.2$ and $\mathscr{T}_n=861.4$---reject the null hypothesis of a~constant linear coefficient between two hammer rams' speed values at the significance level of $\alpha=0.5\%$ (cf.~Table~\ref{tab:crit_val}; the significance level for technical fields is usually smaller than the standard $5\%$), indicating a~changed performance of the second non-calibrated hammer ram.

\begin{figure}[!ht]
	\begin{center}
		\includegraphics[width=0.8\textwidth]{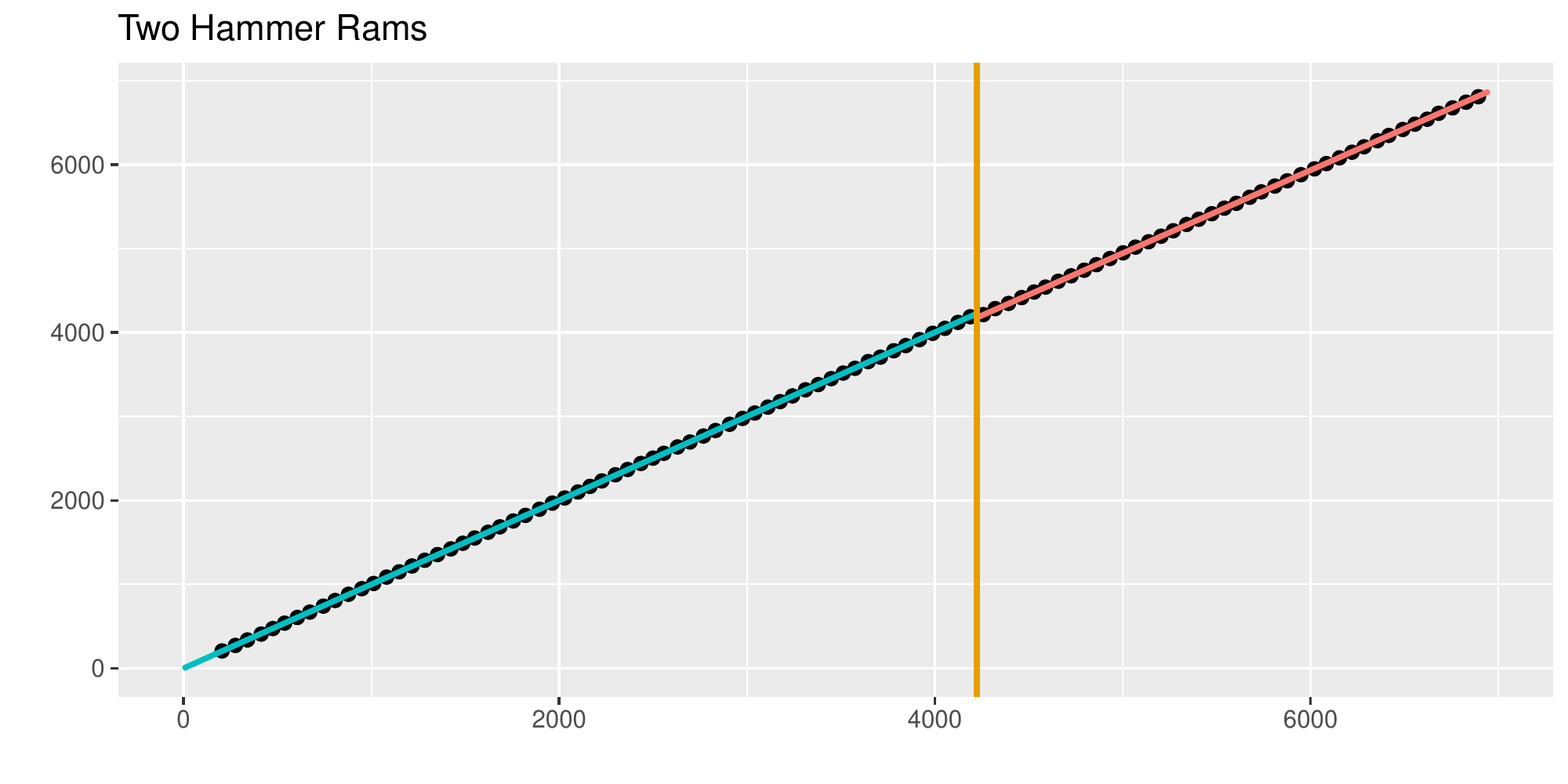}
		\caption{Speeds of two hammer rams, where the first one displayed on the x-axis is calibrated. The changepoint estimate corresponding to the technical issues of the second hammer ram after the 60th measurement is depicted by the vertical line}
		\label{fig:Presser}
	\end{center}
\end{figure}

As an~estimate for our change, we obtain $\hat{\tau}_n=60$ (depicted by a~vertical line in Figure~\ref{fig:Presser}), which corresponds to the 60th measurement of pair of speeds. After this particular measurement, we have background information that a~technical issue appeared to the second hammer ram---one of its oil tubes started to leak. Our procedure is indeed capable to detect and, consequently, to estimate the changepoint in the ratio of the hammer rams' speeds. And this is done fully automatically without expert knowledge about the oil tube issue and also without setting tuning parameters. Moreover, the estimated ratio via the TLS approach before the change is $1.000891$ (the slope of the green line in Figure~\ref{fig:Presser}), which basically says that the hammer rams work approximately in the same way. However, the estimated ratio via the TLS approach after the change is $0.9892154$ (the slope of the red line in Figure~\ref{fig:Presser}), which is significantly different from constant~1 (see a~formal statistical test by~\cite{P2013}).

Other calibration examples, where our methodology is applicable, can be found in, e.g., \cite{ChR2006} or~\cite{GL2011}.

\subsection{Theoretical application: Randomly spaced time series}\label{subsec:Tapplication}
A~motivation for the changepoint problem in randomly spaced time series comes from the changepoint in the polynomial \emph{trending regression}~\citep{AHH2009}. Let us think of a~single regressor measured precisely such that $X_{i,1}\equiv Z_{i,1}=i/(n+1)$. This indeed corresponds to a~situation of a~one-dimensional equally (regularly) spaced time series, where the original time points $\{i\}_{i=1}^n$ are `squeezed' into the interval~$[0,1]$ by dividing of~$n+1$.

Now, let us assume that our outcome observations $Y_i$'s are supposed to be measured at some unknown time points~$Z_{i,1}$'s. However, due to some measurement imprecision or some outer random influence, the actual observation~$Y_i$, which should correspond to~$Z_{i,1}$, is not recorded at time point~$Z_{i,1}$, but at time point~$X_{i,1}$. One can imagine a~long-distance time trial against the chronometer (e.g., an~individual competition in cross-country skiing). There are $n$ intermediate spots on the track, where the athlete's time is recorded. If we think of one particular athlete, we measure at the intermediate spot~$i$ her/his error-prone competition time $X_{i,1}$, which was encumbered by some randomness, instead of the true unobservable time is $Z_{i,1}$. This is because each race is specific and every athlete has a~unique performance during that particular race. We also observe a~time lag~$Y_{i}$ between her/him and the current leader at that spot. Now, one is interested whether there is a~change in linear trend. This would help to analyze whether the particular athlete tried to improve or not during the time trial. One can argue that a~distance of the intermediate spot should be taken into account instead of the athlete's intermediate time. However, the intermediate distance is also measured with error, for instance, a~rounded value of the true unobserved distance is provided. Another example of randomly spaced time series is a~case when the observation times are driven by the series itself. For instance, cumulative counts of occurrences of a~disease in a~given area~\citep{W1986}.

The unobservable sequence $\{Z_{i,1}\}_{i=1}^n$ can be regularly or irregularly spaced. The key issue is to have satisfiable Assumption~\ref{assump:EIVdesign}. Since the developed detection procedures rely on the orthogonal regression, it is sufficient to transform the original randomly spaced time series~$\{X_{i,1},Y_i\}_{i=1}^n$ into, e.g., $\{X_{i,1}/(\max_i\{|X_{i,1}|\}+\epsilon),Y_i/(\max_i\{|X_{i,1}|\}+\epsilon)\}_{i=1}^n$, where a~constant~$\epsilon$ is reasonably large. Afterwards, the proposed tests remain valid when applied to the transformed randomly spaced time series, because~$\beta$ stays unchanged after such a~transformation. Hence, one can test whether the \emph{linear trend} has or has not changed over time.


\section{Conclusions}
Our changepoint problem in linear relations is linearly defined, but comes with a~highly non-linear solution and inference. We have proposed two tests for changepoints with desirable theoretical properties: The asymptotic size of the tests is guaranteed by a~limit theorem even under non-stationarity and weak dependency, the tests and the related changepoint estimator are consistent. We are not aware of any similar results even for independent and identically distributed errors. By combining self-normalization and the proposed spectral weak invariance principle, there are neither tuning constants nor nuisance parameters involved in the whole testing procedure. Therefore, the detection methods are completely data-driven, which makes this framework effortlessly applicable as demonstrated. In our simulations, the tests show reliable performance.


\appendix

\section{Proofs}\label{sec:proofs}

\begin{proof}[Proof of Proposition~\ref{prop:SWIP}]
Let the singular value decomposition of the transformed `partial' data matrix be
\[
[\bX_{[nt]},\bY_{[nt]}]\bSigma^{-1/2}=\bU(t)\bGamma(t)\bV(t)^{\top}=\sum_{i=1}^{p+1}\varsigma(t)^{(i)}\bu(t)^{(i)}\bv(t)^{(i)\top}
\]
for some $t\in(0,1]$. Note that we are in a~situation of no change in the parameter~$\bbeta$. Bearing in mind Assumptions~\ref{assump:EIVdesign} and~\ref{assump:EIVerrors}, \citet[Lemma~2.1]{gleser} and~\citet[Theorem~3.1]{P2011} provide that $0\neq v_{p+1}(t)^{(p+1)}$ (i.e., the last element of the last right-singular vector $\bv(t)^{(p+1)}$ corresponding to the smallest singular value) with probability tending to one as~$n$ increases. 
According to~\citet[proof of Lemma~4.2]{gleser}, one gets
\begin{align}
&\frac{1}{\sqrt{n}}\left(\lambda_{[nt]}-[nt]\sigma^2\right)=\Big(v_{p+1}(t)^{(p+1)}\Big)^2[\ba_{t}^{\top},-1]\left\{\frac{1}{\sqrt{n}}\left(\bD_{t}-\E\bD_{t}\right)\right\}\begin{bmatrix}
\ba_{t}\\
-1
\end{bmatrix}\label{eq:swip1}\\
&\quad+\Big(v_{p+1}(t)^{(p+1)}\Big)^2\sqrt{n}
[\ba_{t}^{\top},-1]\bSigma^{-1/2}\begin{bmatrix}
\bI_{p}\\
\bbeta^{\top}
\end{bmatrix}\frac{1}{n}\bZ_{[nt]}^{\top}\bZ_{[nt]}[\bI_{p},\bbeta]\bSigma^{-1/2}\begin{bmatrix}
\ba_{t}\\
-1
\end{bmatrix},\label{eq:swip2}
\end{align}
where $\ba_{t}:=(\tilde{\bX}_{[nt]}^{\top}\tilde{\bX}_{[nt]}-\lambda_{[nt]}\bI_{p})^{-1}\tilde{\bX}_{[nt]}^{\top}\tilde{\bY}_{[nt]}$ is the TLS estimator for the transformed data $[\tilde{\bX}_{[nt]},\tilde{\bY}_{[nt]}]:=[\bX_{[nt]},\bY_{[nt]}]\bSigma^{-1/2}$ and $\bD_{t}:=\bSigma^{-1/2}[\bX_{[nt]},\bY_{[nt]}]^{\top}[\bX_{[nt]},\bY_{[nt]}]\bSigma^{-1/2}$. With respect to~\cite{P2011}, we have
\begin{equation*}
\Big(v_{p+1}(t)^{(p+1)}\Big)^2=1-\left\|[v_{1}(t)^{(p+1)},\ldots,v_{p}(t)^{(p+1)}]^{\top}\right\|_2^2\to\frac{1}{1+\|\balpha\|_2^2}
\end{equation*}
almost surely as $n\to\infty$. Moreover, $\sqrt{n}(\ba_{t}-\balpha)=\Op(1)$ as $n\to\infty$ by~\cite{Pesta2013}. The strong law of large numbers for $\alpha$-mixing by~\cite{chenwu1989} together with Theorem~3.1 by~\cite{P2011} lead to $\ba_{t}-\balpha=o(1)$ almost surely. Since Assumption~\ref{assump:EIVdesign} holds, the expression in~\eqref{eq:swip2} is $\op(1)$. Furthermore, the expression on the right hand side of~\eqref{eq:swip1} is $o(1)$ away from
\begin{equation}\label{eq:approx}
\frac{1}{1+\|\balpha\|_2^2}[\balpha^{\top},-1]\left\{\frac{1}{\sqrt{n}}\left(\bD_{t}-\E\bD_{t}\right)\right\}
\begin{bmatrix}
\balpha\\
-1
\end{bmatrix}
\end{equation}
as $n\to\infty$. Hence, the process from the left hand side of~\eqref{eq:swip1} in $\dist[0,1]$ has approximately the same distribution as the process~\eqref{eq:approx}. 

Note that
\[
[\balpha^{\top},-1]\bD_{t}\begin{bmatrix}
\balpha\\
-1
\end{bmatrix}=(\bar{\Sigma}_{\beps}-\bar{\bSigma}_{\btheta,\beps}^{\top}\balpha)^2\big\|\bY_{[nt]}-\bX_{[nt]}\bbeta\big\|_2^2.
\]
Using the functional central limit theorem for $\alpha$-mixing by~\cite{Herrndorf1983} or~\citet[Corollary 3.2.1]{linlu1997} in an~analogous fashion as in the proof of Theorem~2.3 by~\cite{Pesta2013}, one gets
\[
\left\{[\balpha^{\top},-1]\left\{\frac{1}{\sqrt{n}}\left(\bD_{t}-\E\bD_{t}\right)\right\}\begin{bmatrix}
\balpha\\
-1
\end{bmatrix}\right\}_{t\in[0,1]}\xrightarrow[n\to\infty]{\dist[0,1]}\{\phi^2\upsilon\mathcal{W}(t)\}_{t\in[0,1]}
\]
due to Assumption~\ref{assump:EIVmisfit}.

Similarly for $\left\{\frac{1}{\sqrt{n}}\left(\widetilde{\lambda}_{[n(1-t)]}-[n(1-t)]\sigma^2\right)\right\}_{t\in[0,1]}$ and $\{\widetilde{\mathcal{W}}(t)\}_{t\in[0,1]}$.
\end{proof}

\begin{proof}[Proof of Theorem~\ref{thm:H0}]
The spectral weak invariance principle from Proposition~\ref{prop:SWIP} and Lemma~1 by~\cite{PW2019} in combination with the continuous mapping device complete the proof.
\end{proof}

\begin{proof}[Proof of Theorem~\ref{thm:H1}]
Under~\ref{eq:HA}, let us find a~lower bound for the smallest eigenvalue of the positive semi-definite matrix
\begin{equation}\label{eq:ABCmatrix}
\frac{1}{n}\bSigma^{-1/2}[\bX,\bY]^{\top}[\bX,\bY]\bSigma^{-1/2}=\frac{1}{n}\begin{bmatrix}
\tilde{\bX}^{\top}\tilde{\bX} & \tilde{\bX}^{\top}\tilde{\bY}\\
\tilde{\bY}^{\top}\tilde{\bX} & \tilde{\bY}^{\top}\tilde{\bY}
\end{bmatrix}=:\begin{bmatrix}
\bA & \bc\\
\bc^{\top} & d
\end{bmatrix},
\end{equation}
where $[\tilde{\bX},\tilde{\bY}]:=[\bX,\bY]\bSigma^{-1/2}$. With respect to~\citet[Theorem~1]{D1988}, we get
\begin{equation}\label{eq:lower}
\lambda_{min}\left(\begin{bmatrix}
\bA & \bc\\
\bc^{\top} & d
\end{bmatrix}\right)\geq \frac{d+\ell}{2}-\sqrt{\frac{(d-\ell)^2}{4}+\bc^{\top}\bc},
\end{equation}
where $\ell$ is any lower bound on the smallest eigenvalue of the matrix~$\bA$. Recall that Assumption~\ref{assump:EIVerrors} and the proof of Theorem~3.1 by~\cite{P2011} provide
\begin{equation}\label{eq:limits}
\frac{1}{n}\tilde{\beps}^{\top}\tilde{\beps}\to\sigma^2,\,\frac{1}{n}\tilde{\btheta}^{\top}\tilde{\beps}\to\zero,\,\frac{1}{n}\tilde{\btheta}^{\top}\tilde{\btheta}\to\sigma^2\bI_{p},\,\frac{1}{n}\tilde{\bZ}^{\top}\tilde{\beps}\to 0,\,\frac{1}{n}\tilde{\bZ}^{\top}\tilde{\btheta}\to\zero
\end{equation}
almost surely as $n\to\infty$, where $[\tilde{\btheta},\tilde{\beps}]:=[\btheta,\beps]\bSigma^{-1/2}$ and $\tilde{\bZ}:=\bZ [\bI_{p},\bbeta]\begin{bmatrix}
\bar{\bSigma}_{\btheta}\\
\bar{\bSigma}_{\btheta,\beps}^{\top}
\end{bmatrix}$. By Assumptions~\ref{assump:EIVchange} and~\ref{assump:EIVdesign}, one can obtain
\begin{multline}\label{eq:lower2}
\lambda(\bA)_{min}=\lambda_{min}\left(\frac{1}{n}(\tilde{\bZ}+\tilde{\btheta})^{\top}(\tilde{\bZ}+\tilde{\btheta})\right)\\
\to\lambda_{min}\left([\bar{\bSigma}_{\btheta},\bar{\bSigma}_{\btheta,\beps}]\begin{bmatrix}
\bI_{p}\\
\bbeta^{\top}
\end{bmatrix}\bDelta[\bI_{p},\bbeta]\begin{bmatrix}
\bar{\bSigma}_{\btheta}\\
\bar{\bSigma}_{\btheta,\beps}^{\top}
\end{bmatrix}+\sigma^2\bI_p\right)=\sigma^2+\eta
\end{multline}
almost surely as $n\to\infty$. Relation~\eqref{eq:lower2} immediately provides a~limit of a~candidate for~$\ell$. Now, \eqref{eq:ABCmatrix} and \eqref{eq:lower} lead to
\begin{multline}\label{eq:ineqLimInf}
\liminf_{n\to\infty}\lambda_{min}\left(\frac{1}{n}\bSigma^{-1/2}[\bX,\bY]^{\top}[\bX,\bY]\bSigma^{-1/2}\right)\\
\geq\frac{\displaystyle\lim_{n\to\infty}\frac{1}{n}\tilde{\bY}^{\top}\tilde{\bY}+\sigma^2+\eta}{2}-\sqrt{\frac{\Big(\displaystyle\lim_{n\to\infty}\frac{1}{n}\tilde{\bY}^{\top}\tilde{\bY}-\sigma^2-\eta\Big)^2}{4}+\lim_{n\to\infty}\left\|\frac{1}{n}\tilde{\bX}^{\top}\tilde{\bY}\right\|_2^2}.
\end{multline}
Assumptions~\ref{assump:EIVchange}, \ref{assump:EIVdesign}, and relations~\eqref{eq:limits} yield
\begin{multline*}
\frac{1}{n}\tilde{\bY}^{\top}\tilde{\bY}=\frac{1}{n}\tilde{\bY}_{\tau}^{\top}\tilde{\bY}_{\tau}+\frac{1}{n}\tilde{\bY}_{-\tau}^{\top}\tilde{\bY}_{-\tau}=(\bar{\bSigma}_{\btheta,\beps}^{\top}+\bar{\Sigma}_{\beps}\bbeta^{\top})\bDelta_{\zeta}(\bar{\bSigma}_{\btheta,\beps}+\bbeta\bar{\Sigma}_{\beps})\\
+\sigma^2+(\bar{\bSigma}_{\btheta,\beps}^{\top}+\bar{\Sigma}_{\beps}(\bbeta+\bdelta)^{\top})\bDelta_{-\zeta}(\bar{\bSigma}_{\btheta,\beps}+(\bbeta+\bdelta)\bar{\Sigma}_{\beps})+o(1)=\kappa+\sigma^2+o(1)
\end{multline*}
and
\begin{multline*}
\frac{1}{n}\tilde{\bX}^{\top}\tilde{\bY}=\frac{1}{n}\tilde{\bX}_{\tau}^{\top}\tilde{\bY}_{\tau}+\frac{1}{n}\tilde{\bX}_{-\tau}^{\top}\tilde{\bY}_{-\tau}=(\bar{\bSigma}_{\btheta}+\bar{\Sigma}_{\btheta,\beps}\bbeta^{\top})\bDelta_{\zeta}(\bar{\bSigma}_{\btheta,\beps}+\bbeta\bar{\Sigma}_{\beps})\\
+(\bar{\bSigma}_{\btheta}+\bar{\Sigma}_{\btheta,\beps}(\bbeta+\bdelta)^{\top})\bDelta_{-\zeta}(\bar{\bSigma}_{\btheta,\beps}+(\bbeta+\bdelta)\bar{\Sigma}_{\beps})+o(1)=\bvarphi+o(1)
\end{multline*}
almost surely as $n\to\infty$. Thus, 
\begin{multline}\label{eq:ineqLimInf2}
\frac{\frac{1}{n}\tilde{\bY}^{\top}\tilde{\bY}+\sigma^2+\eta}{2}-\sqrt{\frac{\big(\frac{1}{n}\tilde{\bY}^{\top}\tilde{\bY}-\sigma^2-\eta\big)^2}{4}+\left\|\frac{1}{n}\tilde{\bX}^{\top}\tilde{\bY}\right\|_2^2}\\
=\frac{\kappa+\eta-\sqrt{(\kappa-\eta)^2+4\bvarphi_{2}^{\top}\bvarphi_{2}}}{2}+\sigma^2+o(1)
\end{multline}
almost surely as $n\to\infty$. Hence, combining~\eqref{eq:ineqLimInf} and~\eqref{eq:ineqLimInf2} ends up with
\begin{equation*}
\liminf_{n\to\infty}\lambda_{min}\left(\frac{1}{n}[\tilde{\bX},\tilde{\bY}]^{\top}[\tilde{\bX},\tilde{\bY}]\right)-\sigma^2\geq\lim_{n\to\infty}\frac{2\{\eta\kappa-\bvarphi^{\top}\bvarphi\}}{\kappa+\eta+\sqrt{(\kappa-\eta)^2+4\bvarphi^{\top}\bvarphi}}.
\end{equation*}
Then,
\begin{equation}\label{eq:delta}
\frac{1}{\sqrt{n}}|\lambda_n-n\sigma^2|\xrightarrow[n\to\infty]{\mbox{a.s.}}\infty
\end{equation}
by Assumption~\ref{assump:EIVchange}.

With respect to Assumptions~\ref{assump:EIVdesign}, \ref{assump:EIVerrors}, \ref{assump:EIVmisfit} and according to the underlying proof of Theorem~\ref{thm:H0}, $\frac{1}{\sqrt{n}}\max_{1\leq i<\tau}\big|\lambda_i-\frac{i}{\tau}\lambda_{\tau}\big|$ and $\frac{1}{\sqrt{n}}\max_{\tau< i\leq n}\big|\widetilde{\lambda}_{i}-\frac{n-i}{n-\tau}\widetilde{\lambda}_{\tau}\big|$ are $\Op(1)$ as $n\to\infty$. 
Moreover, $\frac{1}{\sqrt{n}}\big|\lambda_{\tau}-\tau\sigma^2\big|=\Op(1)$ as $n\to\infty$ due to Proposition~\ref{prop:SWIP}.

Note that there are no changes in the linear parameter corresponding to the first~$\tau$ observations as well as to the last (remaining)~$n-\tau$ observations. Let $k=\tau$. Thus, under~\ref{eq:HA},
\begin{align*}
\mathscr{S}_n&\geq\frac{\big|\lambda_{\tau}-\frac{\tau}{n}\lambda_n\big|}{\max_{1\leq i<\tau}\big|\lambda_i-\frac{i}{\tau}\lambda_{\tau}\big|+\max_{\tau< i\leq n}\big|\widetilde{\lambda}_{i}-\frac{n-i}{n-\tau}\widetilde{\lambda}_{\tau}\big|}\\
&\geq\frac{\frac{1}{\sqrt{n}}\Big|\big|\lambda_{\tau}-\tau\sigma^2\big|-\frac{\tau}{n}\big|n\sigma^2-\lambda_n\big|\Big|}{\frac{1}{\sqrt{n}}\max_{1\leq i<\tau}\big|\lambda_i-\frac{i}{\tau}\lambda_{\tau}\big|+\frac{1}{\sqrt{n}}\max_{\tau< i\leq n}\big|\widetilde{\lambda}_{i}-\frac{n-i}{n-\tau}\widetilde{\lambda}_{\tau}\big|}\xrightarrow[n\to\infty]{\prob}\infty,
\end{align*}
because of~\eqref{eq:delta}.

Furthermore, again under~\ref{eq:HA},
\begin{align*}
\mathscr{T}_n&\geq\frac{\big(\lambda_{\tau}-\frac{\tau}{n}\lambda_n\big)^2}{\sum_{i=1}^{\tau-1}\big(\lambda_i-\frac{i}{\tau}\lambda_{\tau}\big)^2+\sum_{i=\tau+1}^{n}\big(\widetilde{\lambda}_{i}-\frac{n-i}{n-\tau}\widetilde{\lambda}_{\tau}\big)^2}\\
&\geq\frac{\frac{1}{n}\Big(\big|\lambda_{\tau}-\tau\sigma^2\big|-\frac{\tau}{n}\big|n\sigma^2-\lambda_n\big|\Big)^2}{\frac{1}{n}\sum_{i=1}^{\tau-1}\big(\lambda_i-\frac{i}{\tau}\lambda_{\tau}\big)^2+\frac{1}{n}\sum_{i=\tau+1}^{n}\big(\widetilde{\lambda}_{i}-\frac{n-i}{n-\tau}\widetilde{\lambda}_{\tau}\big)^2}\xrightarrow[n\to\infty]{\prob}\infty,
\end{align*}
because of similar arguments as in the case of $\mathscr{S}_n$.
\end{proof}

\begin{proof}[Proof of Remark~\ref{rmk:sharp}]
It is sufficient to replace Theorem~1 by~\cite{D1988} with Theorem~3.1 by~\cite{MZ1995} in the proof of Theorem~\ref{thm:H1}.
\end{proof}

\begin{proof}[Proof of Corollary~\ref{cor:est}]
The estimator can be rewritten as
\begin{equation}\label{eq:numdenom}
\hat{\tau}_n=\mathop{\operatorname{argmax}}_{1\leq k \leq n-1}\frac{\frac{1}{n}\big|\lambda_k-\frac{k}{n}\lambda_n\big|+\frac{1}{n}\big|\widetilde{\lambda}_k-\frac{n-k}{n}\widetilde{\lambda}_0\big|}{\max_{1\leq i< k}\frac{1}{\sqrt{n}}\big|\lambda_i-\frac{i}{k}\lambda_k\big|+\max_{k< i\leq n}\frac{1}{\sqrt{n}}\big|\widetilde{\lambda}_{i}-\frac{n-i}{n-k}\widetilde{\lambda}_{k}\big|}.
\end{equation}
We will treat the numerator~$N_n(k)$ and the denominator~$D_n(k)$ of the above stated ratio separately. Let us use notations from the previous proofs and let us recall Assumption~\ref{assump:EIVchange}, \ref{assump:EIVdesign}, and relations~\eqref{eq:limits}. If $[nt]\leq\tau$, then
\begin{equation*}
\frac{1}{n}\tilde{\bY}_{[nt]}^{\top}\tilde{\bY}_{[nt]}=(\bar{\bSigma}_{\btheta,\beps}^{\top}+\bar{\Sigma}_{\beps}\bbeta^{\top})\bDelta_t(\bar{\bSigma}_{\btheta,\beps}+\bbeta\bar{\Sigma}_{\beps})+t\sigma^2+o(1)
\end{equation*}
almost surely as $n\to\infty$. Otherwise, if $[nt]>\tau$, then
\begin{multline*}
\frac{1}{n}\tilde{\bY}_{[nt]}^{\top}\tilde{\bY}_{[nt]}=(\bar{\bSigma}_{\btheta,\beps}^{\top}+\bar{\Sigma}_{\beps}\bbeta^{\top})\bDelta_{\zeta}(\bar{\bSigma}_{\btheta,\beps}+\bbeta\bar{\Sigma}_{\beps})+t\sigma^2\\
+(\bar{\bSigma}_{\btheta,\beps}^{\top}+\bar{\Sigma}_{\beps}(\bbeta+\bdelta)^{\top})(\bDelta_{t}-\bDelta_{\zeta})(\bar{\bSigma}_{\btheta,\beps}+(\bbeta+\bdelta)\bar{\Sigma}_{\beps})+o(1)
\end{multline*}
almost surely as $n\to\infty$. In both cases, we have
\begin{multline*}
\frac{1}{n}[\tilde{\bX}_{[nt]},\tilde{\bY}_{[nt]}]^{\top}[\tilde{\bX}_{[nt]},\tilde{\bY}_{[nt]}]\\
\xrightarrow[n\to\infty]{\mbox{a.s.}}\begin{bmatrix}
\bvartheta^{\top}\bDelta_{t}\bvartheta+t\sigma^2\bI_p & (\bar{\bSigma}_{\btheta}+\bar{\Sigma}_{\btheta,\beps}\bbeta^{\top})\bDelta_{t}(\bar{\bSigma}_{\btheta,\beps}+\bbeta\bar{\Sigma}_{\beps})\\
(\bar{\bSigma}_{\btheta,\beps}^{\top}+\bar{\Sigma}_{\beps}\bbeta^{\top})\bDelta_{t}(\bar{\bSigma}_{\btheta}+\bbeta\bar{\Sigma}_{\btheta,\beps}^{\top}) & (\bar{\bSigma}_{\btheta,\beps}^{\top}+\bar{\Sigma}_{\beps}\bbeta^{\top})\bDelta_t(\bar{\bSigma}_{\btheta,\beps}+\bbeta\bar{\Sigma}_{\beps})+t\sigma^2
\end{bmatrix}\\
=t\sigma^2\bI_{p+1}+\bSigma^{-1/2}\begin{bmatrix}
\bI_p\\
\bbeta^{\top}
\end{bmatrix}\bDelta_{t}[\bI_p,\bbeta]\bSigma^{-1/2}.
\end{multline*}
Therefore, for the Frobenius matrix norm $\|\cdot\|_F$,
\begin{multline*}
\lim_{n\to\infty}\Bigg|\lambda_{min}\left(\frac{1}{n}[\tilde{\bX}_{[nt]},\tilde{\bY}_{[nt]}]^{\top}[\tilde{\bX}_{[nt]},\tilde{\bY}_{[nt]}]\right)-\frac{[nt]}{n}\lambda_{min}\left(\frac{1}{n}[\tilde{\bX},\tilde{\bY}]^{\top}[\tilde{\bX},\tilde{\bY}]\right)\Bigg|\\
=:\lambda_{dif}(t)\leq\left\|\bSigma^{-1/2}\begin{bmatrix}
\bI_p\\
\bbeta^{\top}
\end{bmatrix}\bDelta_{-t}[\bI_p,\bbeta]\bSigma^{-1/2}\right\|_F
\end{multline*}
uniformly in~$t$ almost surely, because $|\lambda_{min}(\bA)-\lambda_{min}(\bB)|\leq\|\bA-\bB\|_F$ due to~\citet[proof of Lemma~2.3]{gallophd}.

For $k=\tau$, Proposition~\ref{prop:SWIP} together with the continuous mapping theorem yield that the denominator from~\eqref{eq:numdenom}
\begin{equation*}
D_n(\tau)\xrightarrow[n\to\infty]{\dist}\frac{\phi^2\upsilon}{1+\|\balpha\|_2^2}\bigg\{\sup_{0\leq t\leq \zeta}\bigg|\mathcal{W}(t)-\frac{t}{\zeta}\mathcal{W}(\zeta)\bigg|+\sup_{\zeta<t\leq 1}\bigg|\widetilde{\mathcal{W}}(t)-\frac{1-t}{1-\zeta}\widetilde{\mathcal{W}}(\zeta)\bigg|\bigg\}=:W,
\end{equation*}
where the limit~$W$ is strictly positive almost surely. We conclude that $|N_n(\tau)/D_n(\tau)|$ converge in distribution to the random variable $\lambda_{dif}(\zeta)+\widetilde{\lambda}_{dif}(\zeta)/W$ such that $\widetilde{\lambda}_{dif}(t):=\lim_{n\to\infty}|\widetilde{\lambda}_{[nt]}-\frac{n-[nt]}{n}\widetilde{\lambda}_{0}|$. For $k=[nt]$ with $t>\zeta$, we obtain
\begin{align*}
&\max_{1\leq i< [nt]}\frac{1}{\sqrt{n}}\bigg|\lambda_i-\frac{i}{[nt]}\lambda_{[nt]}\bigg|+\max_{[nt]< i\leq n}\frac{1}{\sqrt{n}}\bigg|\widetilde{\lambda}_{i}-\frac{n-i}{n-[nt]}\widetilde{\lambda}_{[nt]}\bigg|\\
&\geq\frac{1}{\sqrt{n}}\bigg|\lambda_{[n\zeta]}-\frac{[n\zeta]}{[nt]}\lambda_{[nt]}\bigg|\geq\frac{1}{\sqrt{n}}\Bigg|\bigg|\lambda_{[n\zeta]}-[n\zeta]\sigma^2\bigg|-\frac{[n\zeta]}{[nt]}\bigg|\lambda_{[nt]}-[nt]\sigma^2\bigg|\Bigg|\\
&\approx\Bigg|\Op(1)-\sqrt{n}\frac{\zeta}{t}\bigg|\frac{\lambda_{[nt]}}{n}-t\sigma^2\bigg|\Bigg|\\
&\approx\Bigg|\Op(1)-2\sqrt{n}\frac{\zeta}{t}\frac{\big|\eta(t)\kappa(t)-\bvarphi(t)^{\top}\bvarphi(t)\big|}{\kappa(t)+\eta(t)+\sqrt{(\kappa(t)-\eta(t))^2+4\bvarphi(t)^{\top}\bvarphi(t)}}\Bigg|\xrightarrow[n\to\infty]{\prob}\infty
\end{align*}
according to the proof of Theorem~\ref{thm:H1} and assumption~\eqref{eq:Cpt}. Similar arguments can be applied in the case~$t<\zeta$ and the convergence holds uniformly for all~$t$ outside any $\epsilon$-neighborhood of~$\zeta$. It follows that for an arbitrary $\epsilon>0$,
\[
\max_{k: |k-\tau|\geq n\epsilon}\frac{|N_n(k)|}{D_n(k)}=\Op\left(\frac{1}{|\eta(t)\kappa(t)-\bvarphi(t)^{\top}\bvarphi(t)|\sqrt{n}}\right).
\]
Now, let us chose a~sequence $d_n\to 0$ with $d_n|\eta(t)\kappa(t)-\bvarphi(t)^{\top}\bvarphi(t)|\sqrt{n}\to\infty$. Then, for any $\epsilon>0$,
\begin{equation*}
\P[|\hat{\tau}/n-\zeta|>\epsilon]\leq \P[|N_n(\tau)/D_n(\tau)|<d_n]+\P\bigg[\max_{k: |k-\tau|\geq n\epsilon}|N_n(k)/D_n(k)|>d_n\bigg]\xrightarrow{n\to\infty}0.
\end{equation*}
\end{proof}

\subsection*{Acknowledgements}
The research of Michal Pe\v{s}ta was supported by the Czech Science Foundation project GA\v{C}R No.~18-01781Y.

\begingroup
\setlength{\bibsep}{4pt}
\bibliography{Pesta-CPLR}

\begin{thebibliography}{}

\bibitem[\protect\citeauthoryear{Anderson}{Anderson}{1958}]{Anderson1958}
Anderson, T.~W. (1958).
\newblock {\em An Introduction to Multivariate Statistical Analysis}.
\newblock New York, NY: John Wiley \& Sons.

\bibitem[\protect\citeauthoryear{Aue, Horv{\'{a}}th, Hu{\v{s}}kov{\'{a}}, and
  Kokoszka}{Aue et~al.}{2008}]{AHHK2008}
Aue, A., L.~Horv{\'{a}}th, M.~Hu{\v{s}}kov{\'{a}}, and P.~Kokoszka (2008).
\newblock Testing for changes in polynomial regression.
\newblock {\em Bernoulli\/}~{\em 14\/}(3), 637--660.

\bibitem[\protect\citeauthoryear{Aue, Horv\'{a}th, and Hu\v{s}kov\'{a}}{Aue
  et~al.}{2009}]{AHH2009}
Aue, A., L.~Horv\'{a}th, and M.~Hu\v{s}kov\'{a} (2009).
\newblock Extreme value theory for stochastic integrals of {L}egendre
  polynomials.
\newblock {\em J.~Multivariate Anal.\/}~{\em 100\/}(5), 1029--1043.

\bibitem[\protect\citeauthoryear{Billingsley}{Billingsley}{1968}]{Billingsley1968}
Billingsley, P. (1968).
\newblock {\em Convergence of Probability Measures\/} (1st ed.).
\newblock New York, NY: John Wiley \& Sons.

\bibitem[\protect\citeauthoryear{Bland and Altman}{Bland and
  Altman}{1986}]{BA1986}
Bland, J.~M. and D.~G. Altman (1986).
\newblock Statistical methods for assessing agreement between two methods of
  clinical measurement.
\newblock {\em Lancet\/}~{\em 1\/}(8476), 307--310.

\bibitem[\protect\citeauthoryear{Booth and Hall}{Booth and Hall}{1993}]{hall}
Booth, J.~G. and P.~Hall (1993).
\newblock Bootstrap confidence regions for functional relationships in
  errors-in-variables models.
\newblock {\em Ann.~Stat.\/}~{\em 21\/}(4), 1780--1791.

\bibitem[\protect\citeauthoryear{Bradley}{Bradley}{2005}]{Bradley2005}
Bradley, R.~C. (2005).
\newblock Basic properties of strong mixing conditions. {A}~survey and some
  open questions.
\newblock {\em Probab.~Surveys\/}~{\em 2}, 107--144.

\bibitem[\protect\citeauthoryear{Buonaccorsi}{Buonaccorsi}{2010}]{Buonaccorsi2010}
Buonaccorsi, J.~P. (2010).
\newblock {\em Measurement Error: Models, Methods, and Applications}.
\newblock Boca Raton, FL: Chapman and Hall/CRC.

\bibitem[\protect\citeauthoryear{Carroll, Roeder, and Wasserman}{Carroll
  et~al.}{1999}]{CRW1999}
Carroll, R.~J., K.~Roeder, and L.~Wasserman (1999).
\newblock Flexible parametric measurement error models.
\newblock {\em Biometrics\/}~{\em 55\/}(1), 44--54.

\bibitem[\protect\citeauthoryear{Carroll, Ruppert, Stefanski, and
  Crainiceanu}{Carroll et~al.}{2006}]{CRSC2006}
Carroll, R.~J., D.~Ruppert, L.~A. Stefanski, and C.~M. Crainiceanu (2006).
\newblock {\em Measurement Error in Nonlinear Models: A Modern Perspective\/}
  (2nd ed.).
\newblock Boca Raton, FL: Chapman and Hall/CRC.

\bibitem[\protect\citeauthoryear{Chang and Huang}{Chang and
  Huang}{1997}]{ChH1997}
Chang, Y.-P. and W.-T. Huang (1997).
\newblock Inferences for the linear errors-in-variables with changepoint
  models.
\newblock {\em J.~Am.~Stat.~Assoc.\/}~{\em 92\/}(437), 171--178.

\bibitem[\protect\citeauthoryear{Chen and Wu}{Chen and Wu}{1989}]{chenwu1989}
Chen, X. and Y.~Wu (1989).
\newblock Strong law for mixing sequence.
\newblock {\em Acta Math.~Appl.~Sin.\/}~{\em 5\/}(4), 367--371.

\bibitem[\protect\citeauthoryear{Cheng and Riu}{Cheng and Riu}{2006}]{ChR2006}
Cheng, C.-L. and J.~Riu (2006).
\newblock On estimating linear relationships when both variables are subject to
  heteroscedastic measurement errors.
\newblock {\em Technometrics\/}~{\em 48\/}(4), 511--519.

\bibitem[\protect\citeauthoryear{Dembo}{Dembo}{1988}]{D1988}
Dembo, A. (1988).
\newblock Bounds on the extreme eigenvalues of positive-definite {T}oeplitz
  matrices.
\newblock {\em IEEE T.~Inform.~Theory\/}~{\em 34\/}(2), 352--355.

\bibitem[\protect\citeauthoryear{Dong, Tan, Jin, and Miao}{Dong
  et~al.}{2016}]{DTJM2016}
Dong, C., C.~Tan, B.~Jin, and B.~Miao (2016).
\newblock Inference on the change point estimator of variance in measurement
  error models.
\newblock {\em Lith.~Math.~J.\/}~{\em 56\/}(4), 474--491.

\bibitem[\protect\citeauthoryear{Fryzlewicz}{Fryzlewicz}{2014}]{F2014}
Fryzlewicz, P. (2014).
\newblock Wild binary segmentation for multiple change-point detection.
\newblock {\em Ann.~Stat.\/}~{\em 42\/}(6), 2243--2281.

\bibitem[\protect\citeauthoryear{Fuller}{Fuller}{1987}]{Fuller1987}
Fuller, W.~A. (1987).
\newblock {\em Measurement Error Models}.
\newblock New York, NY: Wiley.

\bibitem[\protect\citeauthoryear{Gallo}{Gallo}{1982a}]{gallo}
Gallo, P.~P. (1982a).
\newblock Consistency of regression estimates when some variables are subject
  to error.
\newblock {\em Commun. Stat. A-Theor.\/}~{\em 11}, 973--983.

\bibitem[\protect\citeauthoryear{Gallo}{Gallo}{1982b}]{gallophd}
Gallo, P.~P. (1982b).
\newblock {\em Properties of Estimators in Errors-in-Variables Models}.
\newblock Ph.\ D. thesis, University of North Carolina, Chapel Hill, NC.

\bibitem[\protect\citeauthoryear{Gleser}{Gleser}{1981}]{gleser}
Gleser, L.~J. (1981).
\newblock Estimation in a multivariate ``errors in variables'' regression
  model: Large sample results.
\newblock {\em Ann.~Stat.\/}~{\em 9}, 24--44.

\bibitem[\protect\citeauthoryear{Gleser and Watson}{Gleser and
  Watson}{1973}]{watson}
Gleser, L.~J. and G.~S. Watson (1973).
\newblock Estimation of a linear transformation.
\newblock {\em Biometrika\/}~{\em 60\/}(3), 525--534.

\bibitem[\protect\citeauthoryear{Golub and {Van Loan}}{Golub and {Van
  Loan}}{1980}]{golub}
Golub, G.~H. and C.~F. {Van Loan} (1980).
\newblock An analysis of the total least squares problem.
\newblock {\em SIAM J.~Numer.~Anal.\/}~{\em 17\/}(6), 883--893.

\bibitem[\protect\citeauthoryear{G{\"o}ssl and K{\"u}chenhoff}{G{\"o}ssl and
  K{\"u}chenhoff}{2001}]{GK2001}
G{\"o}ssl, C. and H.~K{\"u}chenhoff (2001).
\newblock Bayesian analysis of logistic regression with an unknown change point
  and covariate measurement error.
\newblock {\em Statist.~Med.\/}~{\em 20\/}(20), 3109--3121.

\bibitem[\protect\citeauthoryear{Guo and Little}{Guo and Little}{2011}]{GL2011}
Guo, Y. and R.~J. Little (2011).
\newblock Regression analysis with covariates that have heteroscedastic
  measurement error.
\newblock {\em Statist.~Med.\/}~{\em 30}, 2278--2294.

\bibitem[\protect\citeauthoryear{Herrndorf}{Herrndorf}{1983}]{Herrndorf1983}
Herrndorf, N. (1983).
\newblock Stationary strongly mixing sequences not satisfying the central limit
  theorem.
\newblock {\em Ann.~Probab.\/}~{\em 11\/}(3), 809--813.

\bibitem[\protect\citeauthoryear{Horv{\'{a}}th}{Horv{\'{a}}th}{1995}]{H1995}
Horv{\'{a}}th, L. (1995).
\newblock Detecting changes in linear regressions.
\newblock {\em J.~Am.~Stat.~Assoc.\/}~{\em 26\/}(3), 189--208.

\bibitem[\protect\citeauthoryear{Ibragimov and Linnik}{Ibragimov and
  Linnik}{1971}]{IL1971}
Ibragimov, I.~A. and Y.~V. Linnik (1971).
\newblock {\em Independent and Stationary Sequences of Random Variables}.
\newblock The Netherlands: Wolters-Noordhoff.

\bibitem[\protect\citeauthoryear{Kirch}{Kirch}{2006}]{kir2006}
Kirch, C. (2006).
\newblock {\em Resampling Methods for the Change Analysis of Dependent Data}.
\newblock Ph.\ D. thesis, University of Cologne, Germany.

\bibitem[\protect\citeauthoryear{Kukush, Markovsky, and {Van Huffel}}{Kukush
  et~al.}{2007}]{KMV2007}
Kukush, A., I.~Markovsky, and S.~{Van Huffel} (2007).
\newblock Estimation in a linear multivariate measurement error model with a
  change point in the data.
\newblock {\em Comput.~Stat.~Data~An.\/}~{\em 52\/}(2), 1167--1182.

\bibitem[\protect\citeauthoryear{Li, Ma, and Li}{Li et~al.}{2019}]{LML2019}
Li, M., Y.~Ma, and R.~Li (2019).
\newblock Semiparametric regression for measurement error model with
  heteroscedastic error.
\newblock {\em J.~Multivariate Anal.\/}~{\em 171\/}(2019), 320--338.

\bibitem[\protect\citeauthoryear{Lin and Lu}{Lin and Lu}{1997}]{linlu1997}
Lin, Z. and C.~Lu (1997).
\newblock {\em Limit Theory for Mixing Dependent Random Variables}.
\newblock New York, NY: Springer-Verlag.

\bibitem[\protect\citeauthoryear{Lord}{Lord}{1973}]{L1973}
Lord, F.~M. (1973).
\newblock Testing if two measuring procedures measure the same dimension.
\newblock {\em Psychol.~Bull.\/}~{\em 79\/}(1), 71--72.

\bibitem[\protect\citeauthoryear{Ma and Zarowski}{Ma and
  Zarowski}{1995}]{MZ1995}
Ma, E.~M. and C.~J. Zarowski (1995).
\newblock On lower bounds for the smallest eigenvalue of a~{H}ermitian
  positive-definite matrix.
\newblock {\em IEEE T.~Inform.~Theory\/}~{\em 41\/}(2), 539--540.

\bibitem[\protect\citeauthoryear{Nussbaum}{Nussbaum}{1977}]{N1977}
Nussbaum, M. (1977).
\newblock Asymptotic optimality of estimators of a linear functional relation
  if the ratio of the error variances is known.
\newblock {\em Statistics\/}~{\em 8\/}(2), 173--198.

\bibitem[\protect\citeauthoryear{Pe\v{s}ta}{Pe\v{s}ta}{2011}]{P2011}
Pe\v{s}ta, M. (2011).
\newblock Strongly consistent estimation in dependent errors-in-variables.
\newblock {\em Acta Universitatis Carolinae. Mathematica et Physica\/}~{\em
  52\/}(1), 69--79.

\bibitem[\protect\citeauthoryear{Pe\v{s}ta}{Pe\v{s}ta}{2013a}]{Pesta2013}
Pe\v{s}ta, M. (2013a).
\newblock Asymptotics for weakly dependent errors-in-variables.
\newblock {\em Kybernetika\/}~{\em 49\/}(5), 692--704.

\bibitem[\protect\citeauthoryear{Pe\v{s}ta}{Pe\v{s}ta}{2013b}]{P2013}
Pe\v{s}ta, M. (2013b).
\newblock Total least squares and bootstrapping with application in
  calibration.
\newblock {\em Statistics\/}~{\em 47\/}(5), 966--991.

\bibitem[\protect\citeauthoryear{Pe\v{s}ta}{Pe\v{s}ta}{2016}]{P2016}
Pe\v{s}ta, M. (2016).
\newblock Unitarily invariant errors-in-variables estimation.
\newblock {\em Stat.~Pap.\/}~{\em 57\/}(4), 1041--1057.

\bibitem[\protect\citeauthoryear{Pe\v{s}ta}{Pe\v{s}ta}{2017}]{P2017}
Pe\v{s}ta, M. (2017).
\newblock Block bootstrap for dependent errors-in-variables.
\newblock {\em Commun.~Stat.~A-Theor.\/}~{\em 46\/}(4), 1871--1897.

\bibitem[\protect\citeauthoryear{Pe\v{s}ta and Wendler}{Pe\v{s}ta and
  Wendler}{2019}]{PW2019}
Pe\v{s}ta, M. and M.~Wendler (2019).
\newblock Nuisance-parameter-free changepoint detection in non-stationary
  series.
\newblock {\em TEST\/}, doi.org/10.1007/s11749--019--00659--1, Online First.

\bibitem[\protect\citeauthoryear{Rosenblatt}{Rosenblatt}{1971}]{Rosenblatt1971}
Rosenblatt, M. (1971).
\newblock {\em Markov Processes: Structure and Asymptotic Behavior}.
\newblock Berlin: Springer-Verlag.

\bibitem[\protect\citeauthoryear{Ryu and Lee}{Ryu and Lee}{2014}]{RL2014}
Ryu, S.-J. and H.-K. Lee (2014).
\newblock Estimation of linear transformation by analyzing the periodicity of
  interpolation.
\newblock {\em Pattern Recogn.~Lett.\/}~{\em 36\/}(2014), 89--99.

\bibitem[\protect\citeauthoryear{Shao and Zhang}{Shao and
  Zhang}{2010}]{shao2010testing}
Shao, X. and X.~Zhang (2010).
\newblock Testing for change points in time series.
\newblock {\em J.~Am.~Stat.~Assoc.\/}~{\em 105\/}(491), 1228--1240.

\bibitem[\protect\citeauthoryear{Staudenmayer and Spiegelman}{Staudenmayer and
  Spiegelman}{2002}]{SS2002}
Staudenmayer, J. and D.~Spiegelman (2002).
\newblock Segmented regression in the presence of covariate measurement error
  in main study/validation study designs.
\newblock {\em Biometrics\/}~{\em 58\/}(4), 871--877.

\bibitem[\protect\citeauthoryear{Stefanski}{Stefanski}{2000}]{S2000}
Stefanski, L.~A. (2000).
\newblock Measurement error models.
\newblock {\em J.~Am.~Stat.~Assoc.\/}~{\em 95\/}(452), 1353--1358.

\bibitem[\protect\citeauthoryear{{Van Huffel} and Vandewalle}{{Van Huffel} and
  Vandewalle}{1989}]{HV1989}
{Van Huffel}, S. and J.~Vandewalle (1989).
\newblock Analysis and properties of the generalized total least squares
  problem ${AX} \approx {B}$ when some or all columns in ${A}$ are subject to
  error.
\newblock {\em SIAM J. Matrix Anal. Appl.\/}~{\em 10\/}(3), 294--315.

\bibitem[\protect\citeauthoryear{{Van Huffel} and Vandewalle}{{Van Huffel} and
  Vandewalle}{1991}]{huffel}
{Van Huffel}, S. and J.~Vandewalle (1991).
\newblock {\em The Total Least Squares Problem: Computational Aspects and
  Analysis}.
\newblock Philadelphia, PA: SIAM.

\bibitem[\protect\citeauthoryear{Wright}{Wright}{1986}]{W1986}
Wright, D.~J. (1986).
\newblock Forecasting data published at irregular time intervals using an
  extension of holt's method.
\newblock {\em Manage.~Sci.\/}~{\em 32\/}(4), 499--510.

\bibitem[\protect\citeauthoryear{Yi}{Yi}{2017}]{Yi2017}
Yi, G.~Y. (2017).
\newblock {\em Statistical Analysis with Measurement Error or
  Misclassification}.
\newblock New York, NY: Spriger.

\end{thebibliography}
\endgroup

\end{document}